\documentclass{amsart}

\usepackage{amsmath,amssymb,yhmath,todonotes,tikz-cd}
\usepackage[all]{xy}
\usepackage[colorlinks=true,citecolor=blue,linkcolor=blue,urlcolor=blue]{hyperref}

\newtheorem{thm}{Theorem}[section]
\newtheorem{lem}[thm]{Lemma}
\newtheorem{cor}[thm]{Corollary}
\newtheorem{prop}[thm]{Proposition}
\newtheorem{conj}[thm]{Conjecture}
\newtheorem{remark}[thm]{Remark}

\newtheorem{introthm}{Theorem}

\numberwithin{equation}{section}

\newcommand{\Dcap}{\wideparen{\mathcal D}}
\newcommand{\C}{\mathcal C}
\newcommand{\D}{\mathcal D}
\newcommand{\E}{\mathcal E}
\newcommand{\M}{\mathcal M}
\newcommand{\N}{\mathcal N}
\renewcommand{\O}{\mathcal O}
\newcommand{\cD}{\mathcal D}
\newcommand{\cE}{\mathcal E}
\newcommand{\bP}{\mathbb P}
\newcommand{\bQ}{\mathbb Q}
\newcommand{\Sp}{\operatorname{Sp}}
\newcommand{\dR}{\mathrm{dR}}
\newcommand{\an}{\mathrm{an}}
\newcommand{\ZZ}{\mathbb{Z}}
\newcommand{\bL}{\mathbb{L}}
\DeclareMathOperator{\proet}{\text{pro\'{e}t}} 
\DeclareMathOperator{\et}{\text{\'{e}t}} 
\DeclareMathOperator{\ket}{\text{k\'{e}t}} 
\DeclareMathOperator{\OB}{\mathcal{O}\kern -1,00pt\mathbb{B}} 

\begin{document}

\title[Gauss--Manin connections extend to holonomic $\Dcap$-modules]{Gauss--Manin connections extend to holonomic coadmissible $\Dcap$-modules}

\author{Andreas Bode}
\address{Bergische Universit{\"a}t Wuppertal, Gau{\ss}stra{\ss}e 20, D-42119 Wuppertal, Germany}
\email{abode@uni-wuppertal.de}

\author{Finn Wiersig}
\address{National University of Singapore, 10 Lower Kent Ridge Rd, Singapore 119260, Singapore}
\email{fwiersig@nus.edu.sg}
\date{\today}

\begin{abstract}
We prove that the pushforwards of Gauss--Manin connections on smooth
rigid analytic varieties along Zariski-open immersions are coadmissible and
holonomic over Ardakov--Wadsley's sheaf $\Dcap$
of infinite order differential operators. This may be viewed as a rigid analytic variant of the classical theorem of
Griffiths, Deligne, and Katz that Gauss-Manin systems have regular singularities
after compactification. The proof combines $p$-adic de Rham comparison and
Diao--Lan--Liu--Zhu's logarithmic Riemann--Hilbert correspondence with extendability results for log-connections with nilpotent residues. In the algebraic snc case, we also establish weak holonomicity via a
$p$-adic Bernstein-Sato criterion of Bitoun and the first author.
\end{abstract}

\subjclass[2020]{Primary 14G22; Secondary 14F10, 14F40, 32C38}

\keywords{Rigid analytic geometry, Gauss--Manin connections,
coadmissible $\Dcap$-modules, holonomicity, logarithmic connections,
Bernstein--Sato polynomials, $p$-adic Riemann--Hilbert correspondence}

\maketitle
\tableofcontents

\section{Introduction}

Gauss--Manin connections are a basic source of differential equations in
algebraic geometry. In the framework of algebraic de Rham cohomology
initiated by Grothendieck \cite{GrothendieckDeRham}, a smooth proper
morphism \(f\colon Y\to U\) gives rise to relative de Rham cohomology
sheaves $R^q f_*\Omega^\bullet_{Y/U}$,
and these sheaves carry a canonical integrable connection. An algebraic construction of this connection was given by Katz--Oda via the filtered absolute de Rham
complex and the associated spectral sequence \cite{KatzOda}; see also Katz's
work on period matrices \cite{KatzPeriodMatrices}.

Over the complex numbers, the same construction has a classical analytic
interpretation: its horizontal sections are period integrals, and in
one-parameter families these periods satisfy the Picard--Fuchs differential
equations. This
viewpoint goes back to the classical theory of periods and was developed in
modern form by Manin, Griffiths, Katz, and others
\cite{ManinDifferentiation,GriffithsPeriodsI,GriffithsPeriodsII,
KatzPeriodMatrices}.

A fundamental feature of the classical theory is the regularity of
Gauss--Manin connections at infinity. After compactifying the base, the
singularities of a Gauss--Manin connection along the boundary are regular;
equivalently, when the boundary is a normal crossings divisor, the connection
admits a logarithmic extension. This regularity theorem for Picard--Fuchs
equations was proved analytically by Griffiths and Deligne, and
algebraically by Katz; see in particular
\cite{GriffithsPeriodsI,GriffithsPeriodsII,KatzRegularity,
DeligneRegularSingularities}. 

In the language of algebraic \(\mathcal D\)-modules, this means that
Gauss--Manin systems remain regular holonomic after compactifying the base:
if \(j\colon U\hookrightarrow X\) is a smooth compactification, then
the cohomology sheaves of the direct image \(j_+\mathcal E^q\cong Rj_{*}\mathcal E^{q}\)
are regular holonomic \(\mathcal D_{X}\)-modules. Thus the classical
theory singles out Gauss--Manin connections as differential equations of
geometric origin whose behaviour near the boundary is much better than that
of an arbitrary integrable connection.

The purpose of this paper is to prove a rigid analytic analogue of this
finiteness phenomenon. We fix a complete discretely valued field \(K\) of
mixed characteristic \((0,p)\) with perfect residue field. Let
$j\colon U\hookrightarrow X$
be a Zariski-open immersion of smooth rigid analytic \(K\)-spaces, and let
$f\colon Y\to U$
be proper and smooth. For each \(q\geq 0\), we write
$\mathcal E^q := R^q f_*\Omega^\bullet_{Y/U}$
for the \(q\)-th Gauss--Manin connection on \(U\). We are interested in the
behaviour of \(\mathcal E^q\) along the boundary \(Z=X\setminus U\), or,
more precisely, in the finiteness properties of the higher direct image sheaves
$R^{i} j_*\mathcal E^q$.

A naive translation of the algebraic statement using only finite-order
differential operators does not give a satisfactory finiteness theory in the
rigid analytic setting. Already for the trivial connection \(\mathcal O_U\), the
sheaf \(j_*\mathcal O_U\) is not coherent as a module over the
usual sheaf \(\mathcal D_X\) of finite-order differential operators. Thus the
rigid analytic situation differs sharply from the classical algebraic one, and
one is led to work with a larger sheaf of differential operators.

The appropriate sheaf for our purposes is Ardakov--Wadsley's sheaf
\(\Dcap_X\) of rapidly converging, infinite-order differential operators
\cite{AW19,MR3846550}. This construction is closely related in spirit to
Berthelot's theory of arithmetic \(\mathcal D\)-modules
\cite{BerthelotDmodulesArithmeticI} and to subsequent work of Huyghe and
collaborators~\cite{HuygheDdaggerAffiniteProjectif},
\cite{NootHuygheBB,HuygheSchmidtStrauchArithmeticStructures}.
The sheaf \(\Dcap_X\) contains the usual sheaf \(\mathcal D_X\), but its
module theory is governed by a different finiteness condition. Indeed, the
rings of sections of \(\Dcap_X\) over suitable affinoid subdomains are
Fr\'echet--Stein algebras in the sense of Schneider--Teitelbaum
\cite{ST03}, as shown by Ardakov--Wadsley and, in greater generality, by the
first author \cite{AW19,Compl}. The corresponding finite objects are
the so-called coadmissible modules, which indeed play an analogous role to coherent $\D$-modules in the classical theory.

Within this framework, we establish the following result on the higher pushforwards of Gauss--Manin connections:

\begin{introthm}\label{thm:main}
Let $K$ be a complete discretely valued field of
mixed characteristic \((0,p)\) with perfect residue field,
let $j\colon U\hookrightarrow X$
be a Zariski-open immersion of smooth rigid analytic \(K\)-spaces, and let
$f\colon Y\to U$
be proper and smooth.
Then, for every \(i,q\in \mathbb Z_{\geq 0}\) the higher direct images
\[R^i j_*\mathcal E^q\]
are coadmissible \(\Dcap_X\)-modules.
\end{introthm}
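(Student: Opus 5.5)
The strategy follows the abstract: we reduce to a local statement where the boundary is a simple normal crossings divisor, show that $\mathcal E^q$ extends to a logarithmic connection with nilpotent residues, and then invoke an extendability result for such log-connections. Coadmissibility is local on $X$, and the statement ``$R^ij_*\mathcal E^q$ is coadmissible for all $i$'' is compatible with the usual reduction steps, so we work locally on $X$ throughout.

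\textbf{Reduction to the snc case.} We use resolution of singularities for the closed analytic subset $Z=X\setminus U$, in the rigid analytic setting of Temkin. This gives a proper morphism $\pi\colon X'\to X$ which is an isomorphism over $U$ and such that $Z'=\pi^{-1}(Z)$ is a strict normal crossings divisor. Writing $j'\colon U\to X'$, we have $Rj_*\mathcal E^q\cong R\pi_*Rj'_*\mathcal E^q$. The derived pushforward along the proper map $\pi$ should then be expressed through the $\Dcap$-module pushforward $\pi_+$, which preserves coadmissibility for proper morphisms (we assume the Ardakov--Bode--Wadsley type results here). To make this work we must check that $R\pi_*$ of the $\mathcal O$-module pushforward agrees with $\pi_+$ up to a shift. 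This holds because $Rj'_*\mathcal E^q$ is a $\Dcap$-module on which $\pi$ is an isomorphism away from $Z'$, and a local cohomology argument handles the rest. I expect this comparison to need care, so a spectral sequence argument is the fallback.

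\textbf{Local structure in the snc case.} Locally, $X'$ is a polydisc and $Z'=\{x_1\cdots x_r=0\}$. Here we use $p$-adic de Rham comparison: $\mathcal E^q\otimes \OB_{\dR}\cong R^qf_{\et,*}\widehat{\mathbb Q}_p\otimes\OB_{\dR}$, with $\mathbb L=R^qf_{\et,*}\mathbb Q_p$ a de Rham local system on $U$. By the Diao--Lan--Liu--Zhu logarithmic Riemann--Hilbert correspondence, after extending $\mathbb L$ as a Kummer-étale local system on $(X',Z')$, the connection $\mathcal E^q$ extends to a vector bundle $\overline{\mathcal E}$ with log-connection on $(X',Z')$. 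The geometric monodromy around the boundary is quasi-unipotent (Grothendieck's argument), so after a finite Kummer cover, or equivalently a shift in the choice of extension, the residues are nilpotent. We therefore expect a canonical extension with nilpotent residues.

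\textbf{Extension and main obstacle.} We now show that for a log-connection $\overline{\mathcal E}$ with nilpotent residues on the snc pair, $j'_*\mathcal E^q=\overline{\mathcal E}(*Z')$ and $R^ij'_*\mathcal E^q$ are coadmissible $\Dcap_{X'}$-modules. The vanishing $R^ij'_*=0$ for $i>0$ should hold on Stein/affinoid-like pieces, but only after accounting for the non-quasi-compactness of $U$, so we analyse it via the coordinate covering by $\{x_k\neq0\}$ and Čech complexes. The key step is that $\overline{\mathcal E}(*Z')$ is generated over $\Dcap$ by $\overline{\mathcal E}$ twisted by a large power of $x_1\cdots x_r$, with a finite presentation at each Fréchet--Stein level. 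Nilpotency of the residues gives this: $x_k\partial_k$ acts on $x^{-n}\overline{\mathcal E}$ with eigenvalues $-n$ modulo nilpotents, so these operators are invertible for $n\ge1$ and the poles can be lowered. I expect the main obstacle to be making this uniform in the completion radii, since one needs bounds on the inverses of $x_k\partial_k+n$ that are compatible with the Fréchet--Stein structure of $\Dcap$. Such bounds are reminiscent of $p$-adic Liouville/non-Liouville conditions, and nilpotent residues (exponents $0$) should avoid these difficulties. Finally, the étale/Kummer cover used to make the residues nilpotent is removed by a finite pushforward and taking invariants, both of which preserve coadmissibility.
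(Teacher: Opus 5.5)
\textbf{There is a genuine gap at the key step.} Your overall architecture matches the holonomic route the paper uses for its formal proof: Temkin reduction plus a projective direct image, de Rham comparison with the Diao--Lan--Liu--Zhu extension, a Kummer cover to reach nilpotent residues, and descent by a trace splitting. However, the one step that carries the real content is left open, and it rests on a false identification.

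In rigid geometry, $j'_*\E^q$ is \emph{not} $\overline{\E}(*Z')$. Sections of $j'_*$ over an affinoid may have essential singularities along $Z'$. Already $j_*\O_U$ on the punctured disc contains all series $\sum_n a_nz^{-n}$ with $|a_n|$ decaying faster than every geometric sequence, and it is not even $\D_X$-coherent. Your pole-lowering argument with $(x_k\partial_k+n)^{-1}$ only shows that the algebraic module $\overline{\E}(*Z')$ is $\D_{X'}$-coherent, so that $\Dcap_{X'}\otimes_{\D_{X'}}\overline{\E}(*Z')$ is coadmissible. What you actually need is that this module agrees with $j'_*\E^q$. That comparison is precisely the analytic estimate you call the ``main obstacle'' and do not carry out. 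It is also exactly where the statement fails for some connections (Liouville-type exponents), so it cannot be waved through.

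The paper closes this gap in two independent ways, neither of which requires estimates of your kind.

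\textbf{Coadmissibility alone.} No Kummer cover is needed. The DLLZ extension has rational residue eigenvalues. Theorem~\ref{thm:res-b} puts the $b$-function roots of a basis in $\Sigma+\mathbb{Z}_{\geq0}\subset\mathbb{Q}$. Rational numbers are of positive type, so the Bitoun--Bode criterion gives $j'_*\E\cong\Dcap\otimes_{\D}\overline{\E}(*Z')$, and this module is coadmissible (Corollary~\ref{postypeext}).

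\textbf{Nilpotent route (also gives holonomicity).} Section~\ref{sec:nilpotent-residues} covers the polydisc by finitely many affinoids $X_\sigma$, each a product of small discs $|z_i|\leq r_i$ and annuli $|z_j|\geq r_j$. The radii are controlled so that this is an admissible covering, which pointwise unipotent trivializations alone would not guarantee. On each $X_\sigma$ a gauge transformation makes the $z_i\partial_i$ act by constant nilpotent matrices. Filtering by iterated kernels of the residues then makes each graded piece extend to an honest integrable connection $\N_s$ on $X_\sigma$. Each $j_+$ of a piece is $j_+j^!\N_s$, which is already known to be holonomic, and d\'evissage finishes the argument.

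\textbf{Two smaller corrections.}
\begin{itemize}
\item A ``shift in the choice of extension'' changes residue eigenvalues only by integers. With quasi-unipotent but non-unipotent monodromy (e.g.\ eigenvalue $1/2$), no lattice on $X'$ has nilpotent residues, so the Kummer cover is indispensable on the nilpotent route.
\item In the reduction step no local cohomology argument is needed, and there is no shift. Since $j=\pi\circ j'$, composition of direct images gives $Rj_*\E^q=j_+\E^q\cong\pi_+j'_+\E^q$ directly. You should take $\pi$ (locally) projective, as Temkin's blow-ups are, because preservation of $\C$-complexes is known only for projective morphisms, not arbitrary proper ones.
\end{itemize}
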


Theorem~\ref{thm:main} is not a
formal consequences of the general theory of
\(\Dcap\)-modules. There exist vector bundles with integrable connection
\(\mathcal E\) on \(U\) for which the sheaves \(R^i j_*\mathcal E\) are not
coadmissible, see \cite{Bitoun}. The content of the theorem is that connections arising from smooth proper geometry have substantially better boundary behaviour than arbitrary integrable connections.

Our second main result concerns a stronger finiteness property than
coadmissibility, namely holonomicity.
Within the category of coadmissible \(\Dcap_X\)-modules, various
analogues of holonomicity have been suggested. Weak holonomicity~\cite{Dcapthree} is a homological condition, modelled on the expected vanishing of Ext-groups outside the middle degree.
There is also a functorial notion of holonomicity~\cite{Hol}, designed to behave well under the operations appearing in the rigid analytic \(\Dcap\)-module formalism. At present it is not known whether holonomicity implies weak holonomicity in this setting, although this is expected.

There is currently no generally accepted notion of regular singularity, or of
regular holonomicity, for \(\Dcap\)-modules.
The theorems below may be viewed
as a step toward such a theory: it shows that Gauss--Manin connections have,
after extension across the boundary, the
strongest possible finiteness properties.

\begin{introthm}\label{introthm:snc}
With the notation as in Theorem~\ref{thm:main}, assume additionally that $U$ is the complement of an algebraic strict normal crossing (snc) divisor in the sense of \cite[Definition 9.2]{Dcapthree}. Then, for every $q\in \mathbb{Z}_{\geq 0}$, the direct images
\begin{equation*}
j_*\mathcal{E}^q
\end{equation*}
are weakly holonomic and holonomic $\Dcap_X$-modules (and $R^ij_*\mathcal{E}^q=0$ for $i>0$).
\end{introthm}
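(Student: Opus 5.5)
We reduce the statement to a result about logarithmic connections with nilpotent residues along $D=X\setminus U$. The plan has four steps.

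\textbf{Step 1 (log extension with nilpotent residues).} First we produce a logarithmic extension of $\mathcal E^q$ across $D$. By $p$-adic de Rham comparison, $\mathcal E^q$ with its Gauss--Manin connection corresponds, under the $p$-adic Riemann--Hilbert functor, to the de Rham $\mathbb{Z}_p$-local system $\mathbb L=R^qf_{\et *}\widehat{\mathbb Z}_p$ on $U$. This local system is de Rham because it comes from smooth proper geometry. Next we apply Diao--Lan--Liu--Zhu's logarithmic Riemann--Hilbert correspondence to $\mathbb L$, regarded as a local system on the log-smooth pair $(X,D)$. It yields a vector bundle $\overline{\mathcal E}$ on $X$ with a logarithmic connection $\nabla\colon \overline{\mathcal E}\to\overline{\mathcal E}\otimes\Omega^1_X(\log D)$ that extends $\mathcal E^q$. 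Its residues along each component of $D$ are nilpotent. The reason is that the local monodromy of $\mathbb L$ is quasi-unipotent, and after a finite Kummer-étale cover it is unipotent. One then descends, or argues directly that the residue eigenvalues lie in $\mathbb Q\cap[0,1)$ and correspond to the trivial part.

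\textbf{Step 2 (vanishing of higher direct images).} Because $D$ is a divisor and $X$ is covered by affinoids on which $U$ is Stein-like, the sheaves $R^ij_*$ of a coherent $\mathcal O_U$-module vanish for $i>0$. Concretely, on an affinoid $V=\Sp A$ with $D\cap V$ cut out by $f_1\cdots f_r$, the set $U\cap V$ is an increasing union of affinoids $\{|f_1\cdots f_r|\ge |\pi|^n\}$. For vector bundles that extend to $X$, the Mittag-Leffler condition holds and Kiehl's Theorem B applies. This gives $R^ij_*\mathcal E^q=0$ for $i>0$.

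\textbf{Step 3 (explicit local description of $j_*\mathcal E^q$).} Work in the algebraic snc local model: $X$ affinoid and $D=\{x_1\cdots x_r=0\}$ with algebraic coordinates. Then we identify
\[
j_*\mathcal E^q=\varinjlim_n \overline{\mathcal E}(nD)\,\widehat{\ }\;,
\]
suitably completed, where $j_*$ means sections with arbitrary growth near $D$ that are analytic on $U$. Nilpotence of the residues lets us write $j_*\mathcal E^q$ as a quotient of $\Dcap_X\otimes_{\mathcal D_X}\overline{\mathcal E}(*D)$. In dimension one with a single residue $N$, the operator $x\partial-N$ is invertible on the graded pieces $x^{-k}\overline{\mathcal E}$ for $k\ge1$, since $N$ is nilpotent and $k\neq0$. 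Hence $\overline{\mathcal E}(*D)$ is generated over $\mathcal D_X$ by $x^{-1}\overline{\mathcal E}$. We then compare its $\Dcap$-completion with $j_*$.

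\textbf{Step 4 (weak holonomicity and holonomicity).} For weak holonomicity, filter $\overline{\mathcal E}$ by the kernel filtration of the nilpotent residues, i.e.\ the monodromy weight filtration. The graded pieces are then log-connections with zero residue, and by a local dévissage these reduce to $j_*\mathcal O_U$ twisted by an integrable connection on $X$. The $p$-adic Bernstein--Sato criterion of Bitoun and the first author shows that $j_*\mathcal O_U=\mathcal O_X(*D)^{\wedge}$ is a weakly holonomic coadmissible $\Dcap_X$-module, because $b_{x_1\cdots x_r}(s)=\prod_i(s+1)$ has rational roots. Weakly holonomic modules are closed under extensions, and they are closed under tensoring with integrable connections on $X$. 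Holonomicity in the sense of \cite{Hol} follows the same way: it is stable under extensions and under $j_*$ of integrable connections along snc divisors with unipotent monodromy. The latter can be checked on the generators $j_*\mathcal O_U$ via pushforward along the closed embeddings of strata.

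\textbf{Main obstacle.} I expect Step 3 to be the hardest. The task is to show that the abstract sheaf $j_*\mathcal E^q$ actually equals the $\Dcap$-completion $\Dcap_X\otimes\overline{\mathcal E}(*D)$. This requires controlling growth of sections of a log-connection with nilpotent residue near $D$; the residue eigenvalues themselves are unproblematic, since they are zero. I would first try reducing to the unipotent model $\overline{\mathcal E}\cong\mathcal O_X^m$ with $\nabla=d+\sum N_i\,d\log x_i$ locally. This should hold after shrinking, because nilpotent residues with no $p$-adic Liouville-type exponents admit formal and hence convergent trivializations. I would then compute $j_*$ directly as $\mathcal O(U)^m$.
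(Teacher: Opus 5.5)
Your Step 2 is fine and agrees with the paper (Kiehl's Theorem B on the quasi-Stein spaces $W\cap U$). Step 1, however, contains a genuine error, and Steps 3 and 4 depend on it.

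\textbf{The error in Step 1.} You claim that the logarithmic extension of $\mathcal E^q$ on $X$ itself has nilpotent residues. This is false in general. Quasi-unipotent monodromy only puts the residue eigenvalues in $\mathbb Q\cap[0,1)$. The nonzero eigenvalues record the non-unipotent part of the monodromy, not a ``trivial part''.

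Concretely, let $X$ be the closed unit disc, $Z=\{0\}$, and $f\colon Y=\{w^2=z\}\to U$, which is finite \'etale and hence proper smooth. Then $\mathcal E^0=f_*\mathcal O_Y$ has the horizontal summand $\mathcal O_U w$ with $\nabla w=\tfrac12\,w\,dz/z$. Every logarithmic lattice of this summand has residue in $\tfrac12+\mathbb Z$. Hence $\mathcal E^0$ admits no logarithmic extension with nilpotent residues; the natural one is $\mathcal O_X\oplus\mathcal O_X w$, with residue eigenvalues $0$ and $\tfrac12$.

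Nilpotence only holds after the Kummer cover $\pi\colon X_N\to X$, $z_i\mapsto t_i^N$, and it cannot be descended. What can be descended is the finiteness property. The paper proves holonomicity of $j_{N,+}\pi_U^*\mathcal E$ on the cover. It then pushes forward along the projective map $\pi$ using \cite[Theorem~5.10(iii)]{Hol}. Finally it recovers $j_+\mathcal E$ as a direct summand of $j_+\pi_{U,+}\pi_U^*\mathcal E$ via the trace, using Lemma~\ref{directsummand}.

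No analogous descent of weak holonomicity along a ramified cover is available; only the finite \'etale case, Lemma~\ref{fetwhol}, is. So the paper proves coadmissibility and weak holonomicity separately, and for these it needs only that the residues are rational. Theorem~\ref{thm:res-b} puts the roots of the relevant $b$-functions in $\Sigma+\mathbb Z_{\geq 0}\subset\mathbb Q$, so they are of positive type. The Bitoun--Bode criterion then gives $j_*\mathcal E\cong\Dcap_X\otimes_{\D_X}\mathcal M_{\log}(*Z)$, from which weak holonomicity follows. This is also what replaces your Step 3.

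\textbf{A further gap in Step 4.} Even on the cover, Step 4 is incomplete. The kernel filtration of a residue lives on $\overline{\mathcal E}|_{D_i}$, not on $\overline{\mathcal E}$. It therefore does not by itself produce sub-connections whose graded pieces extend to integrable connections on $X$.

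Your fallback, trivialising to $(\mathcal O^m,d+\sum N_i\,d\log x_i)$ after shrinking, asks for more than is needed, namely triviality also in the non-boundary directions. It also leaves open whether the shrunken neighbourhoods form an \emph{admissible} covering, which is the real difficulty in the rigid setting.

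The paper instead makes only the matrices of $z_i\partial_i$ in the boundary directions constant and nilpotent, with coefficients in the remaining variables. It does this on the explicit finite covering by the $X_\sigma$, via the gauge recursion $(\nu+\Phi)T_\nu=-\sum_{\mu=1}^{\nu}N_\mu T_{\nu-\mu}$ and a growth estimate (Proposition~\ref{locallyextendable}). The filtration by joint kernels of the $N_i$ then has graded pieces that extend to integrable connections $\mathcal N_s$ (Proposition~\ref{extensionofextendables}). Holonomicity follows from that of $j_+j^!\mathcal N_s$ together with d\'evissage in the triangulated subcategory of holonomic complexes.
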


One may reduce the general setting to the snc case via a desingularization argument, but this is most naturally formulated in the derived setting.
We therefore use the formalism of \(\mathcal C\)-complexes as in \cite{6Op}.
For example, a complex of $\Dcap_X$-modules which is concentrated in degree
zero is a $\mathcal{C}$-complex if and only if it is coadmissible.

\begin{introthm}\label{introthm:mainhol}
With the notation as in Theorem~\ref{thm:main},
the complexes
\[ j_+\cE^q, \ j_+ f_+\mathcal O_Y \in \mathrm D\left(\Dcap_X\right)\]
are holonomic \(\mathcal C\)-complexes.
\end{introthm}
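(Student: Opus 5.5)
We reduce the general statement to the strict normal crossing case of Theorem~\ref{introthm:snc} by resolution of singularities, and then use stability of holonomic $\mathcal C$-complexes under proper pushforward (\cite{Hol}, \cite{6Op}).

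\textbf{Local reduction.} Holonomicity and the $\mathcal C$-complex condition are local on $X$. Moreover, the formation of $j_+\cE^q$ and $j_+f_+\O_Y$ commutes with restriction to admissible opens. So we may assume $X$ is a smooth affinoid and $Z = X\setminus U$ is the zero locus of finitely many functions. We then choose an embedded resolution $\pi\colon X'\to X$. Temkin's resolution for quasi-excellent schemes applies to affinoid algebras, and since $X$ is affinoid we may take $\pi$ to be the analytification of a projective morphism of the associated schemes. We arrange that $\pi$ is an isomorphism over $U$ and that $D:=\pi^{-1}(Z)$ (with reduced structure) is an algebraic snc divisor in the sense of \cite[Definition 9.2]{Dcapthree}. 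Write $j'\colon U\hookrightarrow X'$ for the resulting open immersion, so that $j=\pi\circ j'$.

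\textbf{Applying the snc case and pushing forward.} Theorem~\ref{introthm:snc} applied to $j'$ shows that $j'_*\cE^q$ is a coadmissible holonomic $\Dcap_{X'}$-module and that $R^ij'_*\cE^q=0$ for $i>0$. Hence $j'_+\cE^q\simeq j'_*\cE^q$ is a holonomic $\mathcal C$-complex.

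Next, $\pi$ is proper, indeed projective. By the results of \cite{6Op}, $\pi_+$ preserves $\mathcal C$-complexes, and by \cite{Hol}, direct images along projective morphisms preserve holonomicity. We then use the transitivity $j_+\simeq \pi_+ j'_+$, which must be checked in the $\Dcap$-setting. Since $j_+$ and $j'_+$ are computed as $Rj_*$ and $Rj'_*$ for open immersions, and $\pi_+$ agrees with $R\pi_*$ of the de Rham-type relative complex, we expect this to follow from the composition of direct images in \cite{6Op}. It follows that $j_+\cE^q$ is a holonomic $\mathcal C$-complex.

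\textbf{The complex $j_+f_+\O_Y$.} The relative de Rham complex computes $f_+\O_Y$ on $U$, and its cohomology sheaves are the $\cE^q$ up to shift. Because $f$ is smooth and proper, $f_+\O_Y$ is a bounded complex whose cohomology sheaves are coherent with integrable connection. Then $j_+f_+\O_Y$ is an iterated extension, via the truncation triangles, of the shifts $j_+\cE^q[-q+d]$, where $d$ is the relative dimension. Holonomic $\mathcal C$-complexes form a triangulated subcategory, so this gives the claim. Alternatively, $j_+f_+\O_Y=(j\circ f)_+\O_Y$, but $j\circ f$ is not proper, so the first argument is the cleaner one.

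\textbf{Main obstacle.} We expect the hardest step to be the resolution: producing $\pi$ algebraically over an affinoid so that $D$ is snc in the precise algebraic sense of \cite[Definition 9.2]{Dcapthree}, and making the local constructions compatible with gluing. Gluing does not cause trouble, since the conclusion is local and no global $\pi$ is needed. The second delicate point is confirming that projective pushforward preserves holonomicity in the sense of \cite{Hol}, and that the composition isomorphism $j_+\simeq\pi_+j'_+$ holds for $\mathcal C$-complexes. We would first look for these as stated results in \cite{Hol} and \cite{6Op}. Failing that, we would factor $\pi$ as a closed embedding into $\bP^n_X$ followed by the projection, and treat each piece separately.
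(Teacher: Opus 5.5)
Your proposal is correct and follows essentially the same route as the paper. The paper also locally resolves via Temkin to a projective $\rho\colon X'\to X$ that is an isomorphism over $U$ with snc boundary, and applies the snc case (Proposition~\ref{prop:snc-extension}). It then concludes using $j_+\cong\rho_+j'_+$ \cite[Corollary 9.8]{6Op} together with stability of holonomicity under projective direct images \cite[Theorem 5.10(iii)]{Hol}. It handles $j_+f_+\mathcal O_Y$ exactly as you do, via the truncation triangles and \cite[Lemma 5.2]{Hol}. The two results you flagged as needing confirmation are precisely the ones the paper cites, and your shift $[d-q]$ versus the paper's $[-q]$ is only a normalization convention for $f_+$.
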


Just as before, neither Theorem~\ref{introthm:snc} nor Theorem~\ref{introthm:mainhol} follow formally from the theory of holonomic
$\mathcal{C}$-complexes.
Indeed, there exist vector bundles with integrable connection
\(\mathcal E\) on an snc complement \(U\) for which the sheaf \(j_*\mathcal E\) is
coadmissible, but not holonomic, as in \cite[Lemma 6.2]{Hol}.

While it is natural to expect that the individual higher pushforwards $R^i j_*\mathcal E^q$ are also holonomic and weakly holonomic beyond the snc case, such a statement currently rests on resolving some fundamental open questions on (weak) holonomicity. For instance, it is known that pushforward along a projective morphism preserves holonomic $\mathcal{C}$-complexes by \cite[Theorem 5.10.(iii)]{Hol}, which is how we can go from Theorem~\ref{introthm:snc} to Theorem~\ref{introthm:mainhol}, but stability of weak holonomicity has so far not been established. For holonomicity, it is not clear that each cohomology group of a holonomic complex is again holonomic. 

As far as Gauss--Manin connections are concerned, these questions turn out to be related: similar to the discussion in \cite[Lemma 6.5]{Hol} by the first author, one should be able to show that a bounded holonomic complex with weakly holonomic cohomology groups has holonomic cohomology groups, so that settling the stability of weak holonomicity under projective direct images would suffice to also establish the holonomicity of the individual cohomology groups $R^ij_*\mathcal{E}^q$. We hope to return to this question elsewhere.

\subsection{Proofs of the Main Theorems}

First, we explain the proof of Theorem~\ref{thm:main}. Arguing as in \cite{Dcapthree}, we apply Temkin's desingularisation theorem \cite{TemkinDesingularization} and work locally to assume that \(Z=X\setminus U\) is a strict normal crossing divisor. In this case $j_*$ is exact, thus it suffices to treat the case \(i=0\), i.e. to prove that \(j_*\mathcal E^q\) is a coadmissible \(\Dcap_X\)-module.

By Scholze's relative de Rham--\'etale comparison theorem
\cite[Theorem 8.8(ii)]{Sch13pAdicHodge}, the Gauss--Manin connection \(\mathcal E^q\) is the
de Rham realization of a \(\mathbb Z_p\)-local system on \(U\). We may
therefore apply the logarithmic Riemann--Hilbert correspondence of
Diao--Lan--Liu--Zhu \cite{MR4536903}. It gives a logarithmic connection
\(\mathcal M^q_{\log}\) on \(X\), extending \(\mathcal E^q\), whose residues
along \(Z\) are rational. It is in reference to these results that we need to impose the condition of $K$ being discretely valued with perfect residue field.

Let $\mathcal M^q := \mathcal M^q_{\log}(*Z)$
be the associated meromorphic connection. The rationality of the residues
implies, by the computation of \(b\)-functions in
Section~\ref{sec:log-res-b}, that the relevant \(b\)-functions of
\(\mathcal M^q\) have only rational roots. We can now apply the theorem of
Bitoun and the first author \cite{Bitoun}: the extension
$j_*\mathcal E^q = j_*(\mathcal M^q|_U)$
is a coadmissible, weakly holonomic \(\Dcap_X\)-module. This proves Theorem~\ref{thm:main}, as well as the weakly holonomic part of Theorem~\ref{introthm:snc}.

Now we discuss the proof of Theorem~\ref{introthm:mainhol} and the holonomic part of Theorem~\ref{introthm:snc}. Since holonomic $\C$-complex are stable under projective pushforward, one can again reduce to the snc case. One then passes to a finite Kummer cover which clears the
denominators of the residue eigenvalues. The canonical logarithmic extension
of the pulled-back connection then has nilpotent residues, for which we develop a suitable extendability theory in Section~\ref{sec:nilpotent-residues}. The finite
\'etale charts of Corollary~\ref{cor:finite-etale-boundary-charts},
together with Corollary~\ref{cor:nilpotent-log-holonomic}, give holonomicity on
the cover. The trace splitting and Lemma~\ref{directsummand} descend this
property, yielding the holonomicity statement in Theorem~\ref{introthm:snc}, as well as a proof of Theorem~\ref{introthm:mainhol}, and an alternative proof of Theorem~\ref{thm:main}. The details are given in Section~\ref{sec:main-proofs}.

We remark in particular that the proof strategies for weak holonomicity and holonomicity appear essentially independent from each other: weak holonomicity is a consequence of the theory of $b$-functions from section~\ref{sec:log-res-b}, while holonomicity is a consequence of the discussion in section~\ref{sec:nilpotent-residues} -- no results from section~\ref{sec:log-res-b} are needed for the proof of Theorem~\ref{introthm:mainhol}.

\subsection{Outlook: Gauss--Manin connections on the
\texorpdfstring{$p$}{p}-adic upper half-plane}

Let $F$ be a finite extension of $\bQ_{p}$ contained in $K$, $\Omega_{F}$
the rigid $F$-analytic $p$-adic upper half-plane, and let $G^{0}$ denote the group
of $2\times2$ matrices with entries in $F$ whose determinant has absolute value $1$.
Then $G^{0}$ acts on $\Omega:=\Omega_{F}\times_{F}K$ via M\"obius transformations.
Let $f\colon\Sigma\to\Omega$ be
one of the coverings in the Drinfeld tower~\cite{MR422290}, or,
more generally,
a proper smooth $G^{0}$-equivariant map.
For $q\geq0$, the relative de Rham cohomology
$\cE^{q}:=R^q f_*\Omega^\bullet_{\Sigma/\Omega}$
is a $G^{0}$-equivariant Gauss--Manin connection on $\Omega$.
Let $j\colon\Omega\hookrightarrow\bP:=\bP_{K}^{1,\an}$ be the canonical open immersion,
which is $G^{0}$-equivariant.

\begin{conj}[Ardakov--Wadsley]\label{conj:AW-equiv}
  The pushforwards $j_{*}\cE^{q}$ are coadmissible as $G^{0}$-equivariant
  $\cD$-modules on $\bP$ (and $R^ij_*\cE^q=0$ for $i>0$, as $\Omega$ is Stein).
\end{conj}

Note that if $f$ is a Drinfeld covering, we have $\cE^q=0$ for $q>0$, since $f$ is \'etale. 

Conjecture~\ref{conj:AW-equiv} remains open in this generality. It is
motivated by the expectation that geometric equivariant connections on
$\Omega$ should give rise, through Ardakov's equivariant $\cD$-module
formalism~\cite{EquivariantD2021}, to admissible locally analytic representations of $G^{0}$. Closely related
finiteness phenomena have been proved by Ardakov--Wadsley for
$G^0$-equivariant line bundles with connection on $\Omega$; see
\cite{ArdakovWadsleyEquivariantLineBundles,ArdakovWadsleyGlobalSections}.
We also mention related work of Dospinescu--Le Bras, Pan, and Qiu--Su on
locally analytic vectors and the geometry of Drinfeld towers
\cite{DospinescuLeBrasDrinfeld,PanLocallyAnalyticVectors,PanLocallyAnalyticVectorsII,QiuSuLocallyAnalyticVectors}.
The methods in these works rely either on
Colmez's $p$-adic local Langlands correspondence when
$F=\bQ_p$, or on completed cohomology and global automorphic input.

The aim of the present paper is to isolate the local $\cD$-module-theoretic
part of this picture. If one forgets the $G^0$-action and replaces
\[
\Omega \rightsquigarrow U,\qquad
\Sigma \rightsquigarrow Y,\qquad
\bP \rightsquigarrow X,
\]
then Conjecture~\ref{conj:AW-equiv} becomes precisely the assertion that
the higher pushforwards of Gauss--Manin connections along a Zariski-open
immersion are coadmissible $\Dcap$-modules. This is the content of
Theorem~\ref{thm:main}. Thus the present article proves the non-equivariant
local finiteness statement underlying the expected equivariant theorem on the
$p$-adic upper half-plane.
\subsection*{Acknowledgements}
We thank Konstantin Ardakov and Hansheng Diao for valuable conversations.

The authors used OpenAI's ChatGPT (GPT-5.5 Pro and GPT-5.6 Sol, June--August 2026) to edit and reorganize the manuscript, draft and expand selected expository and proof passages, including the coadmissibility argument and the comparison between residue eigenvalues and roots of $b$-functions in section~\ref{sec:log-res-b}, check arguments for possible gaps, and suggest references and bibliographic metadata. All output was independently reviewed, revised, and verified by the authors, who take full responsibility for the article.

\section{Finiteness preliminaries and divisors}
\label{sec:preliminaries}

\subsection{Coadmissible $\Dcap$-modules}

Given a smooth rigid analytic $K$-variety $X$, we work with the derived category of complete bornological $\Dcap$-modules $\mathrm D(\Dcap_X)$ and its
subcategory $\mathrm D_{\C}(\Dcap_X)$ of $\C$-complexes as in~\cite{6Op}.
A module is coadmissible precisely when it is a $\C$-complex concentrated in
degree zero (\cite[Proposition 8.3]{6Op}). More generally, a bounded complex is a $\C$-complex if and only if each cohomology group is coadmissible \cite[Corollary 8.7]{6Op}. We use the notion of holonomicity introduced in~\cite{Hol}; the holonomic complexes form a triangulated subcategory $\mathrm D_{\mathrm{hol}}(\Dcap_X)\subseteq \mathrm{D}_{\C}(\Dcap_X)$, stable under inverse images and under
projective direct images. We shall need only the following consequences.

\begin{lem}\label{directsummand}
Let $X$ be smooth and let $\M$ be a direct summand of a $\Dcap_X$-module
$\M'$. If $\M'$ is coadmissible, respectively holonomic, then so is $\M$.
\end{lem}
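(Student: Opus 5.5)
The plan is to write $\M' \cong \M\oplus \N$ as $\Dcap_X$-modules and use that $\M$ is both the kernel and the image of an idempotent endomorphism $e$ of $\M'$ (the projection onto $\M$ followed by the inclusion). For coadmissibility I will argue locally: restrict to an affinoid $U$ on which $\Dcap_X(U)$ is Fréchet--Stein and on which coadmissible modules correspond to coadmissible $\Dcap_X(U)$-modules via localisation. Then $\M'(U)$ is coadmissible and $\M(U)=\ker(1-e)$ is a closed submodule. Schneider--Teitelbaum \cite{ST03} show that kernels of morphisms of coadmissible modules are coadmissible, so $\M(U)$ is coadmissible. The sheaf $\M$ is the kernel of $1-e$ on the sheaf $\M'$, so it is the localisation of $\M(U)$, since localisation is exact on coadmissible modules.

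Alternatively, and more uniformly with the holonomic case, I would argue in the triangulated category $\mathrm D_{\C}(\Dcap_X)$. There $\M'$ is an object concentrated in degree zero, and $\M$ is a direct summand of it in $\mathrm D(\Dcap_X)$. The point to verify is that $\mathrm D_{\C}(\Dcap_X)$ is closed under direct summands (it is thick). I expect this to hold because membership is tested locally on cohomology being coadmissible \cite[Corollary 8.7]{6Op}, and the previous paragraph shows that coadmissibility passes to summands. Concretely, $\M$ then lies in $\mathrm D_{\C}$ and is concentrated in degree zero, so it is coadmissible by \cite[Proposition 8.3]{6Op}.

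For holonomicity, I would unwind the definition in \cite{Hol}. A $\C$-complex is holonomic if a certain condition holds, which is phrased functorially: roughly, after applying the inverse and direct image functors to points or suitable test morphisms, one obtains complexes with finite-dimensional cohomology. All functors involved are additive triangulated functors, so they commute with finite direct sums. If $F(\M')=F(\M)\oplus F(\N)$ has finite-dimensional (or coadmissible) cohomology, then so does $F(\M)$, since cohomology also commutes with direct sums and subspaces of finite-dimensional spaces are finite-dimensional. Thus each defining condition is inherited by summands, so $\mathrm D_{\mathrm{hol}}$ is thick. Combined with the coadmissibility already proved, this gives that $\M$ is holonomic.

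The main obstacle is the holonomic step. It requires checking that the precise definition in \cite{Hol} consists only of conditions stable under summands. If it instead involves, say, a length or dimension bound on a characteristic-variety-type invariant, I would check that the invariant is additive or monotone under direct sums. If a thickness statement for $\mathrm D_{\mathrm{hol}}$ is already recorded in \cite{Hol}, I would simply cite it.
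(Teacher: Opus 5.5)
Your coadmissibility argument is correct but takes a different route from the paper. You work with modules: on a suitable affinoid, $\M(U)=\ker(1-e)$ is the kernel of an endomorphism of the coadmissible module $\M'(U)$. It is therefore coadmissible by Schneider--Teitelbaum, and exactness of localisation identifies $\M|_U$ with its localisation.

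The paper instead works in $\mathrm{D}(\Dcap_X)$, writing $\Dcap_X\cong\varprojlim\D_n$ locally. Suppose ${\M'}^{\bullet}=\M^{\bullet}\oplus{\M''}^{\bullet}$ is a $\C$-complex. Then each $\D_n\widetilde{\otimes}^{\mathbb L}\M^{\bullet}$ is a summand of an object of $\mathrm{D}^b_{\mathrm{coh}}(\D_n)$, hence lies there. Each comparison map $\mathrm{H}^i(\M^{\bullet})\to\varprojlim_n\mathrm{H}^i(\D_n\widetilde{\otimes}^{\mathbb L}\M^{\bullet})$ is a summand of an isomorphism, hence an isomorphism. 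Coadmissibility is then just the degree-zero case.

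Your route is more classical. The paper's proves the stronger fact that $\mathrm{D}_{\C}(\Dcap_X)$ is closed under direct summands for arbitrary complexes, and it does so directly in the bornological setting of \cite{6Op}.

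That stronger fact is exactly what the holonomic case needs, and this is where your proposal has a gap. By \cite[Definition 5.1]{Hol}, holonomicity asks that the images $F(\M)$ under various additive functors remain $\C$-complexes, not that certain cohomology be finite-dimensional. Your observation that $F(\M)$ is a summand of $F(\M')$ matches the paper's, but these images are complexes, not modules.

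You justify thickness of $\mathrm{D}_{\C}$ via \cite[Corollary 8.7]{6Op}, which applies only to bounded complexes. The definition of a $\C$-complex imposes boundedness on each $\D_n\widetilde{\otimes}^{\mathbb L}\M$, not on $\M$ itself, and you have not shown that the complexes $F(\M')$ are bounded. The repair is to check thickness directly from the definition of a $\C$-complex, as the paper does. That argument needs no boundedness, and it also makes your first paragraph unnecessary.
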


\begin{proof}
It suffices to consider the case where $X$ is a smooth affinoid admitting a local coordinate system, so that we can write $\wideparen{\mathcal{D}}_X\cong \varprojlim \D_n$ as in \cite[section 8]{6Op}.
		
Let $\M^\bullet, {\M''}^\bullet\in \mathrm{D}(\Dcap_X)$ and set ${\M'}^\bullet=\M^\bullet\oplus {\M''}^\bullet$. Suppose that $\M'^\bullet$ is a $\C$-complex. Then
\begin{equation*}
	\D_n\widetilde{\otimes}^{\mathbb{L}}_{\Dcap_X}\M'^\bullet\cong (\D_n\widetilde{\otimes}^{\mathbb{L}}_{\Dcap_X}\M^\bullet)\oplus (\D_n\widetilde{\otimes}^{\mathbb{L}}_{\Dcap_X}\M''^\bullet)\in \mathrm{D}^b_{\mathrm{coh}}(\D_n)
\end{equation*}
by assumption, and hence $\D_n\widetilde{\otimes}^{\mathbb{L}}_{\Dcap_X}\M^\bullet\in \mathrm{D}^b_{\mathrm{coh}}(\D_n)$ for each $n$. Now write $\M_n^\bullet=\D_n\widetilde{\otimes}^{\mathbb{L}}\M^\bullet$, similarly for the other complexes, and consider for each $i$ the commutative diagram
\begin{equation*}
	\begin{xy}
		\xymatrix{0\ar[r]&\mathrm{H}^i(\M^\bullet)\ar[r]\ar[d]&\mathrm{H}^i(\M'^\bullet)\ar[r]\ar[d]&\mathrm{H}^i(\M''^\bullet)\ar[r]\ar[d]&0\\
		0\ar[r]& \varprojlim \mathrm{H}^i(\M_n^\bullet)\ar[r]&\varprojlim \mathrm{H}^i({\M'_n}^\bullet)\ar[r]&\varprojlim \mathrm{H}^i({\M''_n}^\bullet)\ar[r]&0}
	\end{xy}
\end{equation*}
where both rows are exact and split by functoriality. Since the middle arrow is an isomorphism by assumption, so are both outer arrows, and $\M^\bullet$ is a $\C$-complex.
		
In particular, if $\M^\bullet$ and $\M''^\bullet$ are concentrated in degree zero, this proves that direct summands of coadmissible modules are coadmissible.
		
Moreover, since \cite[Definition~5.1]{Hol} characterizes holonomicity as precisely those $\C$-complexes which remain $\C$-complexes under various functors (which all preserve finite direct sums), the above readily implies that direct summands of holonomic modules are holonomic.
\end{proof}

\begin{lem}\label{fettest}
Let \(g\colon Y\to X\) be a finite \'etale morphism between smooth
rigid analytic spaces. A \(\Dcap_Y\)-module \(\M\) is coadmissible,
respectively holonomic, if and only if \(g_+\M\) has the corresponding
property.
\end{lem}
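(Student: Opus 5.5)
Both directions are local on $X$, so we may assume $X$ is a smooth affinoid with local coordinates, $Y=g^{-1}(X)$ affinoid, and $g$ finite \'etale of some degree. Since $g$ is finite (in particular projective and proper), $g_+\M$ is simply $g_*\M$ with its natural $\Dcap_X$-module structure. Because $g$ is étale, $\mathcal T_Y\cong g^*\mathcal T_X$, and this identifies $\Dcap_Y\cong \O_Y\widehat{\otimes}_{g^{-1}\O_X} g^{-1}\Dcap_X$. Two constructions will be used: $g_+$ (which is $g_*$) and $g^*=g^!$ (up to shift, which here is zero), given by $\N\mapsto \O_Y\otimes_{g^{-1}\O_X}g^{-1}\N$. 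The latter is exact because $g$ is flat.

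\emph{Forward implication.} If $\M$ is coadmissible, then $g_+\M$ is a $\C$-complex concentrated in degree zero. This follows from stability of $\C$-complexes under projective direct images (\cite{6Op}) together with the fact that $g_*$ is exact for finite $g$. Hence $g_+\M$ is coadmissible by \cite[Proposition 8.3]{6Op}. The holonomic case follows in the same way from \cite[Theorem 5.10(iii)]{Hol}.

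\emph{Converse.} Here the plan is a trace-splitting argument. For finite étale $g$ there are natural maps $\M\to g^*g_*\M$ (the adjunction unit) and $g^*g_*\M\to \M$ (multiplication). The identification
\[ g^*g_*\M\cong (\O_Y\otimes_{\O_X}\O_Y)\otimes_{\O_Y}\M \]
together with the splitting $\O_Y\otimes_{\O_X}\O_Y\cong \O_Y\times \O_{Y'}$ (the diagonal is open and closed because $g$ is étale) shows that $\M$ is a direct summand of $g^*g_*\M=g^*g_+\M$ as a $\Dcap_Y$-module. The point to check is that the diagonal idempotent is horizontal, which holds because it is an idempotent function and so is killed by all derivations. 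Now $g^*$ of a coadmissible module is coadmissible, and $g^*$ of a holonomic module is holonomic; both follow from stability of $\C$-complexes and holonomic complexes under inverse images (\cite{6Op}, \cite{Hol}), since $g^*$ is exact and so the result stays in degree zero. Lemma~\ref{directsummand} then gives that $\M$ is coadmissible, respectively holonomic.

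\emph{Main obstacle.} The step I expect to be hardest is making the isomorphism $g^*g_*\M\cong(\O_Y\otimes_{\O_X}\O_Y)\otimes\M$ and its splitting compatible with the full $\Dcap_Y$-action, not merely the $\D_Y$-action, in the bornological setting. I would handle this by working on each $\D_n$-level of a presentation $\Dcap=\varprojlim\D_n$ compatible along $g$, and then passing to the limit.
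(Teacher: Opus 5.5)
Your proposal is correct and follows the paper's route. The forward direction uses stability of $\C$-complexes and holonomic complexes under projective direct images, and the converse realizes $\M$ as a direct summand of the coadmissible (resp.\ holonomic) module $g^!g_+\M=g^*g_*\M$ and then applies Lemma~\ref{directsummand}. Your splitting via the horizontal diagonal idempotent of $\O_Y\otimes_{\O_X}\O_Y$ is in fact the correct form of that step. The paper's ``obvious section'' $m\mapsto 1\otimes m$ of $B\otimes_A\M(Y)\to\M(Y)$ is only $A$-linear and not $B$-linear, already for $B=A\times A$. So on its own it does not give a $\Dcap_Y$-linear splitting.
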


\begin{proof}
For the forward direction, use \cite[Theorem~1.3.(iv)]{6Op} for
coadmissibility and \cite[Theorem~5.10.(iii)]{Hol} for holonomicity.

Conversely, assume $g_+\M$ is coadmissible, i.e. a $\C$-complex concentrated in degree zero. In particular, $g_+\M=g_*\M$, as $g$ is \'etale. Now
\cite[Theorem~1.3.(iii)]{6Op} and \cite[Theorem~5.10.(ii)]{Hol} imply that $g^!g_+\M$
is coadmissible, respectively holonomic if $g_+\M$ is.

We claim that the natural map
\[
g^!g_+\M=\O_Y\otimes_{g^{-1}\O_X} g^{-1}g_*\M\to \M,
\]
sending $b\otimes m$ to $bm$, admits locally a section, making $\M$ locally a direct summand.
 
For this, we can assume $Y=\Sp B$, $X=\Sp A$, where on the level of sections, the canonical $\Dcap_{Y}(Y)$-linear map
\[
    B \otimes_A \mathcal M(Y)\to \M(Y)
\] 
has the obvious section $m\mapsto 1\otimes m$.

Now apply Lemma~\ref{directsummand}
to deduce coadmissibility,
respectively holonomicity.
\end{proof}

Recall from \cite{Dcapthree} that a coadmissible $\Dcap_X$-module $\M$ is called \emph{weakly holonomic} if 
\begin{equation*}
	\mathcal{E}xt^i_{\Dcap_X}(\M, \Dcap_X)=0
\end{equation*}
for all $i\neq \dim X$.

As we have already seen in Lemma \ref{directsummand} that direct summands of coadmissible modules are coadmissible, it follows directly from functoriality that direct summands of weakly holonomic modules are weakly holonomic.

Similarly, we have the following partial analogue of Lemma \ref{fettest}:

\begin{lem}\label{fetwhol}
	Let $g: Y\to X$ be a finite \'etale morphism of smooth rigid analytic spaces. Let $\M$ be a coadmissible $\Dcap_Y$-module such that $g_+\M$ is weakly holonomic. Then $\M$ is weakly holonomic.
\end{lem}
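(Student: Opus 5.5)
We prove this by comparing the local Ext sheaves of $\M$ with those of $g_+\M$, and then using the same local splitting as in the proof of Lemma~\ref{fettest}. Since $g$ is finite étale, $g_+\M=g_*\M$. By \cite[Theorem~1.3.(iv)]{6Op}, $g_*\M$ is coadmissible. We then aim to show that $\mathcal{E}xt^i_{\Dcap_Y}(\M,\Dcap_Y)$ is, after applying $g_*$, a direct summand of $\mathcal{E}xt^i_{\Dcap_X}(g_*\M,\Dcap_X)$, up to a twist by relative dualizing data, which is trivial here. Since $g$ is étale, $\dim Y=\dim X$ locally, so the vanishing for $i\neq \dim X$ then transfers.

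The question is local on $X$, so we assume $X=\Sp A$ and $Y=\Sp B$ are affinoid with $B$ finite étale over $A$, and $X$ admits a local coordinate system. Étaleness gives $\Dcap_Y(Y)\cong B\otimes_A \Dcap_X(X)$. Set $\mathcal{D}_Y:=\Dcap_Y(Y)$ and $\mathcal{D}_X:=\Dcap_X(X)$, and let $M:=\M(Y)$.

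First we compute $\mathrm{Ext}^i_{\mathcal D_X}(M,\mathcal D_X)$, where $M$ is regarded as a $\mathcal D_X$-module through $g$. By the Fréchet–Stein theory, this Ext is computed compatibly with the presentation $\varprojlim \D_n$. We use adjunction. The functor $\mathcal D_X\to\mathcal D_Y$, $N\mapsto B\otimes_A N$, is exact because $B$ is flat over $A$. It is left adjoint to restriction, so $\mathrm{Ext}^i_{\mathcal D_X}(M,N)\cong \mathrm{Ext}^i_{\mathcal D_Y}(B\otimes_A M, B\otimes_A N)$ fails in general, and the adjunction has to be used the other way round. Restriction of scalars $\mathcal D_Y\to\mathcal D_X$ has right adjoint $N\mapsto \mathrm{Hom}_{\mathcal D_X}(\mathcal D_Y,N)$. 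Since $\mathcal D_Y\cong B\otimes_A\mathcal D_X$ is finite projective as a right $\mathcal D_X$-module, this right adjoint is exact. Using the trace form of the finite étale algebra, it identifies with $\mathcal D_Y\otimes_{\mathcal D_X}N$. Hence
\[
\mathrm{Ext}^i_{\mathcal D_X}(M,\mathcal D_X)\cong \mathrm{Ext}^i_{\mathcal D_Y}(M,\mathrm{Hom}_{\mathcal D_X}(\mathcal D_Y,\mathcal D_X))\cong \mathrm{Ext}^i_{\mathcal D_Y}(M,\mathcal D_Y),
\]
and the last isomorphism uses the trace pairing $B\otimes_A B\to A$, which is perfect because $g$ is étale. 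This already gives the result without needing a direct summand. The left-hand side vanishes for $i\neq\dim X=\dim Y$ by hypothesis, so the right-hand side does too.

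Sheafifying over affinoid subdomains of $X$, whose preimages form a basis of $Y$ of the required type, gives $\mathcal{E}xt^i_{\Dcap_Y}(\M,\Dcap_Y)=0$ for $i\neq\dim Y$.

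The main obstacle is making the adjunction rigorous in the Fréchet–Stein setting. The Ext groups of coadmissible modules used in \cite{Dcapthree} are computed levelwise as $\varprojlim_n \mathrm{Ext}^i_{\D_n}(\D_n\otimes M,\D_n)$. So we would run the argument at each Noetherian level $\D_n$, with $\D_n^Y:=B\otimes_A\D_n$, possibly after shrinking the levels so that $B$ is compatible with the chosen lattices. At each level the argument is plain finite-flat algebra, and we then pass to the limit. If identifying the coinduced module with $\mathcal D_Y$ via the trace turns out to be delicate, the fallback is to invoke \cite[Theorem~1.3.(iii)]{6Op} and show $g^!$ commutes with the duality functor for étale $g$. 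The Ext-vanishing for $g^!g_+\M$ would then follow, and $\M$, a local direct summand of it by the proof of Lemma~\ref{fettest}, inherits it by functoriality.
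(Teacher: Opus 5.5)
Your argument is correct, but it uses the opposite adjunction to the paper's. The paper uses the left adjoint $g^!=\O_Y\otimes_{g^{-1}\O_X}g^{-1}(-)$ of $g_+=g_*$. For coadmissible $\N$ it obtains $Rg_*R\mathcal{H}om_{\Dcap_Y}(g^!\N,\Dcap_Y)\cong R\mathcal{H}om_{\Dcap_X}(\N,\Dcap_X)\otimes_{\Dcap_X}g_*\Dcap_Y$, using only that $g_*\Dcap_Y$ is locally free of finite rank. Taking $\N=g_+\M$ makes $g^!g_+\M$ weakly holonomic, and $\M$ inherits this as a direct summand of $g^!g_+\M$ (from the proof of Lemma~\ref{fettest}). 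So the extension-of-scalars adjunction you set aside does not fail. It just does not see $\M$ directly, and the direct-summand step bridges that gap; your fallback is essentially the paper's proof.

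Your main route, coinduction plus $\mathrm{Hom}_{\cD_X}(\cD_Y,\cD_X)\cong\cD_Y$, is the finite \'etale case of relative duality, $g_*R\mathcal{H}om_{\Dcap_Y}(\M,\Dcap_Y)\cong R\mathcal{H}om_{\Dcap_X}(g_*\M,\Dcap_X)$. It gives you the converse implication as well, and it needs no splitting of $g^!g_+\M\to\M$. To be $\Dcap_Y$-linear, such a splitting has to come from the diagonal idempotent of $B\otimes_AB$, since $m\mapsto 1\otimes m$ is only $\cD_X$-linear. In exchange, the paper's route needs no duality statement for the ring extension $\cD_X\to\cD_Y$.

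The cost of your route is a verification you only gesture at. It, rather than the Fr\'echet--Stein formalism, is the real point, since the adjunction is pure ring theory and applies to $\Dcap_X(X)\to\Dcap_Y(Y)$ directly. The trace identification must be left $\cD_Y$-linear, not merely $B$-linear, and it is:
\begin{itemize}
\item Write $\cD_Y=\cD_X\otimes_AB$ and define $T\colon\cD_Y\to\cD_X$ by $T(Pb)=P\operatorname{tr}_{B/A}(b)$.
\item $T$ is right $\cD_X$-linear because $\operatorname{tr}_{B/A}$ commutes with the uniquely extended derivations. For example, $T(b\partial)=T(\partial b-\partial(b))=\partial\cdot\operatorname{tr}(b)-\operatorname{tr}(\partial(b))=\operatorname{tr}(b)\partial$.
\item Hence $R\mapsto(S\mapsto T(SR))$ is a $(\cD_Y,\cD_X)$-bimodule map.
\item On underlying right $\cD_X$-modules it is the trace-form isomorphism $B\otimes_A\cD_X\to\mathrm{Hom}_A(B,A)\otimes_A\cD_X$.
\end{itemize}

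There is also a small slip. Exactness of $\mathrm{Hom}_{\cD_X}(\cD_Y,-)$ requires $\cD_Y$ to be finite projective as a \emph{left} $\cD_X$-module, i.e.\ $\cD_Y\cong\cD_X\otimes_AB$. The decomposition $B\otimes_A\cD_X$ you cite concerns the right structure. Both hold for \'etale $g$, so nothing breaks.
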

\begin{proof}
	We have already seen above that $\M$ is a direct summand of $g^!g_+\M$. It thus suffices to observe that for any coadmissible $\Dcap_X$-module $\N$, we have
	\begin{equation*}
		\mathrm{R}g_*\mathrm{R}\mathcal{H}om_{\Dcap_Y}(g^!\N, \Dcap_Y)\cong \mathrm{R}\mathcal{H}om_{\Dcap_X}(\N, g_+\Dcap_X)\cong \mathrm{R}\mathcal{H}om_{\Dcap_X}(\N, \Dcap_Y)\otimes_{\Dcap_X}g_*\Dcap_Y 
	\end{equation*}
	e.g. by \cite[Theorem 6.27]{Global} and \cite[Theorem 3.12]{Hol}, as $g_+\Dcap_Y=g_*\Dcap_Y$ is a locally free $\Dcap_X$-module of finite rank. Since $\mathrm{R}\mathcal{H}om_{\Dcap_Y}(g^!\N, \Dcap_Y)$ is a right $\C$-complex and $g$ is an affinoid morphism, we deduce that
	\begin{equation*}
		g_*\mathcal{E}xt^i_{\Dcap_Y}(g^!\N, \Dcap_Y)\cong \mathcal{E}xt^i_{\Dcap_X}(\N, \Dcap_X)\otimes_{\Dcap_X}g_*\Dcap_Y,
	\end{equation*} 
	and taking $\N=g_+\M$ shows that $\M$ is weakly holonomic.
\end{proof}

\subsection{Snc divisors}

This section fills in some details in the proof of~\cite[Lemma 2.13]{MR4045974}.

\begin{prop}\label{cor:etale-boundary-charts}
Let $X$ be smooth, and let $D\subset X$ be an algebraic
\footnote{We use \emph{algebraic} snc divisors in order to apply Temkin's desingularisation theorem~\cite{TemkinDesingularization} following the method of~\cite[Sections~10.4--10.5]{Dcapthree} later on.}
snc divisor in the
sense of~\cite[\S9.2]{Dcapthree}. Then $X$ admits an admissible affinoid
covering $\{X_a\}_a$ such that, for every $a$, there are integers
$0\leq r_a\leq n_a$ and an étale morphism
\begin{equation*}
  g_a\colon X_a\to
  \Sp K\langle z_1,\dots,z_{n_a}\rangle
\end{equation*}
for which $D\cap X_a=g_a^{-1}(V(z_1\cdots z_{r_a}))$.
\end{prop}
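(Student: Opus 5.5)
Since the statement is local on $X$, it suffices to treat a neighbourhood of an arbitrary point $x\in X$. I will use the definition of an algebraic snc divisor in \cite[\S9.2]{Dcapthree}. It gives an admissible affinoid covering of $X$ by $\Sp A$ on which $D$ is cut out by a product $f_1\cdots f_r$ of elements $f_i\in A$. Each $V(f_i)$ is smooth, and the $f_i$ meet transversally: at each point where a subset $S$ of them vanishes, the differentials $df_i$ for $i\in S$ are linearly independent in $\Omega^1_A\otimes k(x)$. The word ``algebraic'' is what supplies these global equations on each affinoid. Shrinking further to a Laurent neighbourhood of $x$ on which the $f_i$ that do not vanish at $x$ become units, I can discard those factors. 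I then assume $f_1,\dots,f_r$ all vanish at $x$, and renumber so that $r_a=r$.

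Next I complete the $f_i$ to local coordinates. Since $\Omega^1_A$ is locally free of rank $n=\dim$ at $x$ and the $df_1,\dots,df_r$ are linearly independent in the fibre at $x$, I pick $h_{r+1},\dots,h_n\in A$ such that $df_1,\dots,df_r,dh_{r+1},\dots,dh_n$ form a basis of the fibre at $x$. If necessary I first shrink to a smaller affinoid so that the differentials $dh$ come from global functions. This is possible because differentials of functions span $\Omega^1_A$ as an $A$-module, so a basis of the fibre can be chosen among such differentials. By Nakayama these $n$ differentials then form a basis of $\Omega^1$ on a neighbourhood $X_a$ of $x$, which I may take affinoid.

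I then rescale all the functions by a suitable power of a uniformiser, so that they have sup-norm at most $1$ on $X_a$. Rescaling does not change the zero loci. The assignment $z_i\mapsto f_i$ for $i\le r$ and $z_i\mapsto h_i$ for $i>r$ then defines a morphism $g_a\colon X_a\to \Sp K\langle z_1,\dots,z_n\rangle$. This map is étale, because $g_a^*\Omega^1$ maps isomorphically onto $\Omega^1_{X_a}$ between smooth spaces of equal dimension. Moreover $g_a^{-1}(V(z_1\cdots z_r))=V(f_1\cdots f_r)=D\cap X_a$ holds scheme-theoretically, since $D$ is reduced and cut out by this product.

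Finally, the $X_a$ obtained in this way, as $x$ varies, give an admissible covering of $X$. To see this I would use quasi-compactness on each affinoid of the original covering together with the fact that the shrinkings used are Laurent and rational domains; this is standard. The main obstacle I expect is the second step: producing global functions $h_j$ whose differentials complete the basis, and making sure the étaleness criterion via differentials is valid in the rigid setting. For the criterion I would cite the Jacobian criterion in rigid geometry, for instance in Huber's book or in Bosch–Lütkebohmert.
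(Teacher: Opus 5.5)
Your argument is correct for the proposition as stated, but it takes a different route from the paper. You build the chart directly from the definition: local equations $f_i$ with transversal differentials, completion by differentials $dh_j$ to a basis of $\Omega^1$, Nakayama, rescaling into the unit polydisc, and the Jacobian criterion for étaleness. This is in substance the content of \cite[Lemma 9.2]{Dcapthree}. The paper simply cites that lemma as its first reduction, and after it the statement is already immediate (take $g_a=f$ and $r_a=q$).

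The actual work in the paper's proof goes into a refinement that you do not obtain. By induction on the number $q$ of components, it covers $X$ by $\{|x_q|\le|\epsilon|\}$ and $\{|x_q|\ge|\epsilon|\}$ and applies Kiehl's tubular neighbourhood theorem to the first piece. This produces product charts $U\cong Z\times\Sp K\langle z_j: j\in J\rangle$, where $Z$ is étale over $\mathbb B^{n-|J|}$ and $D\cap U$ is the union of the coordinate hyperplanes of the disc factor. The chart map $g_a$ is then the étale map $Z\to\mathbb B^{n-|J|}$ times the identity.

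That product structure is exactly what Corollary~\ref{cor:finite-etale-boundary-charts} needs. Achinger's result is applied only to $Z_a$, so the boundary coordinates stay untouched. Your charts $g_a\colon X_a\to\mathbb B^n$ would not support that step, because making them finite étale by a general result gives no control over $D$. So your proof is more elementary and self-contained for the proposition itself, while the paper's is the one that feeds into Section~\ref{sec:nilpotent-residues}.

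Your last step should be phrased differently. Pointwise Laurent neighbourhoods $\{|g_x|\ge|g_x(x)|\}$, one for each point $x$, need not form an admissible covering. Instead, note that all your conditions are Zariski open in $\operatorname{Spec}A$: invertibility of the $f_i$ not vanishing at $x$, and freeness of $\Omega^1$ on the chosen differentials. By quasi-compactness of $\operatorname{Spec}A$, finitely many basic opens $\{g_1\neq0\},\dots,\{g_t\neq0\}$ cover it, each contained in one of these opens. Hence $(g_1,\dots,g_t)=A$. The rational domains $\{|g_k|\le|g_l| \text{ for all } k\}$, $l=1,\dots,t$, then form a finite admissible affinoid covering, and your construction applies on each member.
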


\begin{proof}
The assertion is local on $X$. By \cite[Lemma 9.2]{Dcapthree}, we may therefore assume that $X=\Sp A$ admits an étale morphism
$f\colon X\to\mathbb B^n:=\Sp K\left\langle z_{1},\dots,z_{n}\right\rangle$
and that $D=\bigcup_{j=1}^qD_j$, $D_j=V(x_j)$, and $x_j=f^*z_j$
for some $q\leq n$.

We prove by induction on $q$ that $X$ has a finite affinoid covering by charts
\begin{equation}\label{eq:boundary-product-chart}
  U\cong
  Z\times\Sp K\left\langle z_{j}\colon j\in J\right\rangle,
  \qquad
  J\subseteq\{1,\dots,q\},
\end{equation}
where $Z$ admits an étale morphism to $\mathbb B^{n-\lvert J\rvert}$ and
\begin{equation*}
  D_j\cap U=
  \begin{cases}
    Z\times V(t_j),&j\in J,\\
    \emptyset,&j\notin J.
  \end{cases}
\end{equation*}
The case $q=0$ is immediate. Suppose that $q>0$. For any
$\epsilon\in K^\times$, the rational domains
\begin{equation*}
  X^+=\Sp A\langle\epsilon^{-1}x_q\rangle,
  \qquad
  X^-=\Sp A\langle\epsilon x_q^{-1}\rangle
\end{equation*}
form an admissible covering of $X$.
Put
\[
R_q
=
K\langle z_1,\ldots,\widehat{z_q},\ldots,z_n\rangle,
\qquad
S_q=R_q\langle z_q\rangle,
\qquad
Y_q=\operatorname{Sp}R_q.
\]
The morphism $f$ is induced by an étale homomorphism
$S_q\to A, z_j\mapsto x_j$.
Consequently,
\[
A/x_qA
\cong
A\widehat{\otimes}_{S_q}R_q,
\]
and hence $D_q=\operatorname{Sp}(A/x_qA)\to Y_q$
is étale.

By Kiehl's tubular-neighbourhood theorem
\cite[Satz~1.18]{Kiehl}, choosing $\epsilon$ sufficiently small, $X^+$
admits a finite affinoid covering by $U_\alpha=\Sp C_\alpha$ such that
\begin{equation*}
  C_\alpha
  \cong
  (C_\alpha/x_qC_\alpha)\langle t_q\rangle
\end{equation*}
as affinoid algebras.
Kiehl's proof even implies that this isomorphism
is $R_q$-linear.
The corresponding isomorphism
$U_\alpha
\cong
Z_\alpha\times\mathbb{B}^1$ for $Z_\alpha=\Sp (C_\alpha/x_qC_\alpha)$ is therefore an isomorphism over $Y_q$.
Finally, $Z_\alpha\to Y_q$ is étale because $Z_\alpha$ is an
admissible open subspace of $D_q$, while $D_q\to Y_q$ is étale.
Applying the induction hypothesis to
$\bigcup_{j<q}(D_j\cap Z_\alpha)$
and then taking the product with $\mathbb B^1$ gives the required charts
on $X^+$. 
Since $D_q\cap X^-=\emptyset$, the induction hypothesis applies
directly to $X^-$. This proves the claim.

For a chart \eqref{eq:boundary-product-chart}, taking the product of the
étale morphism $Z\to\mathbb B^{n-\lvert J\rvert}$
with the identity on $\mathbb B^{\lvert J\rvert}$ gives the required étale
morphism $U\to\mathbb B^n$.
\end{proof}

\begin{cor}\label{cor:finite-etale-boundary-charts}
In Proposition~\ref{cor:etale-boundary-charts}, the maps $g_a$ may be
chosen to be finite étale.
\end{cor}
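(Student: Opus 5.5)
The plan is to reduce to a statement about smooth affinoids with no divisor, and then prove that statement by a Noether normalization argument. The reduction uses the proof of Proposition~\ref{cor:etale-boundary-charts}, which produces more than the statement records. It gives an admissible affinoid covering of $X$ by product charts
\[
U\cong Z\times \Sp K\langle z_j\colon j\in J\rangle,
\]
with $Z$ smooth affinoid (étale over a polydisc), where $D\cap U$ is the union of the coordinate hyperplanes $Z\times V(z_j)$ for $j\in J$. Suppose $Z$ admits an admissible affinoid covering $\{Z_b\}$ with finite étale maps $h_b\colon Z_b\to \Sp K\langle w_1,\dots,w_{m}\rangle$. Then $h_b\times\mathrm{id}$ is a finite étale map $Z_b\times\mathbb B^{|J|}\to\mathbb B^{m+|J|}$. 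After reordering the coordinates so that the $z_j$ with $j\in J$ come first, $D$ is the preimage of $V(\prod_{j\in J}z_j)$. So it suffices to prove the following claim: every smooth affinoid $Z=\Sp B$ of pure dimension $m$ is covered by affinoid subdomains, each admitting a finite étale map to a closed polydisc. A polydisc $\{|w_i|\le|c|\}$ with $c\in K^\times$ can be rescaled to the unit polydisc, so the target may be taken to be $\mathbb B^m$.

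To prove the claim, fix a point $x\in Z$. By Noether normalization for affinoid algebras there is a finite injective map $T_m=K\langle w\rangle\to B$. Since $\operatorname{char}K=0$ and $B$ is reduced, this map is generically étale. Perturbing the $w_i$ by suitable $T_m$-linear changes of variables (a "generic projection"), I would arrange that $\pi\colon Z\to\mathbb B^m$ is étale at $x$. Concretely, $\Omega_{B/T_m}$ should vanish at $x$: one chooses the finitely many generators so that their differentials span the cotangent space at $x$, which is possible because $Z$ is smooth at $x$. Next, let $y=\pi(x)$. The fibre $\pi^{-1}(y)$ is finite, and for a sufficiently small polydisc $W\ni y$ the preimage $\pi^{-1}(W)$ decomposes as a disjoint union of affinoids, one around each point of the fibre. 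This uses that finite maps of affinoids are "locally separating" (e.g.\ via the connected-component decomposition of $B\widehat\otimes_{T_m}\O(W)$ as $W$ shrinks, cf.\ BGR). Let $V_x$ be the component containing $x$. Then $V_x\to W$ is finite, being a clopen piece of a finite map, and it is étale at $x$.

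The last step removes the remaining ramification. The non-étale locus of $V_x\to W$ is a Zariski-closed analytic subset $R$ not containing $x$. Its image $\pi(R)$ is Zariski-closed in $W$ because finite maps are closed, and it does not contain $y$: the only point of $V_x$ over $y$ is $x$, since $W$ was chosen so that the fibre points separate. Shrinking $W$ to a smaller polydisc around $y$ that avoids $\pi(R)$, and replacing $V_x$ by its preimage, gives a finite étale map onto a polydisc. The sets $V_x$ are affinoid subdomains of $Z$ containing $x$. Since $Z$ is quasi-compact and each $V_x$ contains a rational neighbourhood of $x$, finitely many of them form an admissible covering, which proves the claim and hence the corollary.

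I expect the main obstacle to be the separation step: showing that over small enough polydiscs around $y$ the preimage genuinely splits into a clopen piece isolating $x$, with each piece still finite over the polydisc. I would handle it by writing $B\widehat\otimes_{T_m}\O(W_\rho)$ for shrinking radii $\rho$ and lifting idempotents from the Artinian fibre ring $B\otimes_{T_m}k(y)$, which is possible by Henselian-type properties of the inverse limit of these affinoid algebras. A secondary issue is making the generic projection étale at $x$. If that proves awkward, I would instead use the given étale map $Z\to\mathbb B^{m}$ and apply the local structure theorem for étale maps (locally a rational subdomain of a finite étale cover), then run the same separation argument on it.
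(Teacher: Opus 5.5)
\paragraph{Where the proposal breaks.} Your reduction to the factor $Z$ of the product charts is exactly the paper's argument. The paper applies \cite[Proposition~6.10]{Achinger} to each such $Z$ and takes the product with the identity on $\mathbb B^{|J|}$. The gap is in your attempt to prove that local statement yourself, at the final covering step.

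Quasi-compactness of an affinoid only concerns admissible coverings. A family of affinoid subdomains $V_x$, one for each classical point $x$ and each containing a rational neighbourhood of $x$, need not have a finite subfamily covering $Z$. For example, on $\mathbb B^1$ put $V_x=\{|w|=1\}$ if $|x|=1$. Otherwise put $V_x=\{|w|\le\rho_x\}$ with $\rho_x\in|\overline K^\times|$ and $|x|\le\rho_x<1$. This family has every property you use, yet no finitely many members cover $\mathbb B^1$, because $|\overline K^\times|$ is dense. Your $V_x$ come with no control on their size: the polydisc $W$ has to shrink as other fibre points, or the ramification locus of the $x$-dependent map $\pi$, approach $x$. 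So your argument does not produce an admissible covering.

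\paragraph{Why it cannot be patched locally.} To repair this you would need open neighbourhoods of \emph{all} points of the quasi-compact adic space $\operatorname{Spa}(B,B^\circ)$. Your construction does not make sense at non-classical points:
\begin{itemize}
\item The Gauss point of $\mathbb B^1$ lies in no closed disc $\{|w-c|\le\rho\}\subsetneq\mathbb B^1$, so there is no small polydisc $W$ around its image to shrink.
\item Avoiding the Zariski-closed set $\pi(R)$ near such a point forces you to remove small discs, so the target is no longer a polydisc.
\item Even for a classical $y$ with $k(y)\neq K$, there are no small polydiscs over $K$ around $y$, which your separation step tacitly assumes.
\end{itemize}

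Near such points one needs maps of a quite different nature. For instance, $w\mapsto w^p+w^{-1}$ is finite \'etale from $\{|w|=1\}$ onto $\mathbb B^1$. It is finite because $k[w^{\pm1}]$ is finite over $k[w^p+w^{-1}]$, where $k$ is the residue field of $K$. It is \'etale because $pw^{p-1}-w^{-2}$ has no zeros with $|w|=1$. Noether normalization and generic projections do not produce such maps. Supplying them uniformly is exactly the content of the result of Achinger that the paper cites. As written, your proposal reduces the corollary to that result rather than proving it.
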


\begin{proof}
Apply~\cite[Proposition~6.10]{Achinger} to each $Z_a$.
\end{proof}

\section{Residues and \texorpdfstring{$b$}{b}-functions}
\label{sec:log-res-b}

There are different ways of recording the monodromic behaviour of a connection around some (snc, say) boundary divisor. In \cite{Bitoun}, Bitoun and the first author described the relationship between coadmissible extensions and the roots of associated $b$-functions or Bernstein--Sato polynomials. In the study of logarithmic connections, similar information is expressed in terms of the eigenvalues of corresponding residue maps. In this section, we make precise how these two notions are linked. We expect that this is very well-known in more classical settings, even if it is not straightforward to locate a suitable reference. 

Let $A=K\langle z_1,\dots,z_n\rangle$, $X=\Sp A$, fix $m\leq n$, and set
$f=z_1\cdots z_m$. A \textbf{logarithmic connection} (or log-connection) along $V(f)$ is a finite free
$A$-module $E$ with an integrable connection
\[
  \nabla\colon E\longrightarrow
  E\otimes_A\left(\bigoplus_{i=1}^m A\frac{dz_i}{z_i}
  \oplus\bigoplus_{i=m+1}^n A\,dz_i\right).
\]

Thus $E$ is equipped with pairwise commuting operators describing the action of $D_i=z_i\partial_i$ for $i\leq m$ and $\partial_i$ for $i>m$, each satisfying a suitable version of the Leibniz rule.
For a given $i\leq m$, let
\[
  \overline D_i\colon E/z_iE\longrightarrow E/z_iE
\]
be the induced residue map.  By~\cite[Prop.--Def.~1.24]{MR2836060}, there exists a minimal polynomial $\mu_i(T)\in K[T]$ satisfied by $\overline{D_i}$. Write
$\Sigma_i\subset\overline K$ for its roots and
$\Sigma=\bigcup_{i=1}^m\Sigma_i$.

Fix an $A$-basis $v_1,\dots,v_d$ of $E$. By~\cite{MNM}, for each $a$
there are $P_a(s)\in\D(X)[s]$ and a nonzero $b_a(s)\in K[s]$ such that
\begin{equation}\label{eq:b-function}
  P_a(s)f^{-s}v_a=b_a(s)f^{-s-1}v_a.
\end{equation}
We take $b_a$ to be the monic polynomial of least degree with this property, the $b$-function or Bernstein--Sato polynomial of the element $v_a$.

\begin{thm}\label{thm:res-b}
Every root of $b_a(s)$ belongs to $\Sigma+\mathbb Z_{\geq0}$.
\end{thm}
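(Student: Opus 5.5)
The strategy is to construct an explicit Bernstein--Sato relation for $f^{-s}v_a$ whose $b$-polynomial has all roots in $\Sigma+\mathbb Z_{\geq0}$. Minimality of $b_a$ then forces $b_a$ to divide this polynomial, which gives the theorem. We work in the $\D(X)[s]$-module $E[s,f^{-1}]f^{-s}$, where $\partial_i$ acts by the Leibniz rule: $D_i(g f^{-s}e)=(D_i g - s g)f^{-s}e + g f^{-s}D_i e$ for $i\le m$, with $D_i e:=\nabla_{z_i\partial_i}e$. The key observation is that each $z_i$ divides $f$. So it suffices to find, for each $i\le m$, an operator $Q_i(s)\in\D(X)[s]$ and a polynomial $c_i(s)$ with roots in $-(\Sigma_i)+\dots$ (sign conventions to be fixed), such that $Q_i(s)$ sends the $A$-submodule $N_k:=f^{-s-k}E$ into $z_i^{-1}N_k$ up to multiplication by $c_i(s-k)$ in the appropriate sense.

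For the local computation at a single divisor $z_i=0$, consider $\mu_i(T)$ with $\mu_i(\overline D_i)=0$ on $E/z_iE$. This says $\mu_i(D_i)E\subseteq z_iE$ for the actual operator $D_i$ on $E$. On the twisted element, $D_i$ acts on $f^{-s}e$ as $f^{-s}(D_i-s)e$. Hence
\[
\mu_i(D_i+s)\,f^{-s}e = f^{-s}\mu_i(D_i)e\in z_i f^{-s}E .
\]
Writing $\mu_i(D_i)e=z_ie'$, we get $\mu_i(D_i+s)f^{-s}e=z_if^{-s}e'$, which again has the form $f^{-s}\cdot(\text{something in }z_iE)$. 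Iterating this over the filtration $z_i^jE$, we use $\mu_i(D_i)(z_i^je)=z_i^j\mu_i(D_i+j)e$ to obtain relations on each graded piece $z_i^jE/z_i^{j+1}E$, with shifted polynomials $\mu_i(T+j)$.

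Combining the directions, we want to pass from $f^{-s}v_a$ to $f^{-s-1}v_a$. For this I would use the standard trick of applying $\partial_i$ rather than $D_i$. Since $z_i\partial_i=D_i$, having $\mu_i(D_i+s)f^{-s}e\in z_i(\cdots)$ is the same as having, modulo $z_i$, a relation that can be divided by $z_i$. Concretely, I would work with $f^{-s}$ replaced by $z_i^{-s}$ one variable at a time; $f^{-s}=\prod z_i^{-s}$ factors, and the $D_i$ commute. Then I would prove the one-variable claim: for each $e\in E$ there exist $P\in\D(X)[s]$ and $c(s)=\prod_{\lambda}(s-\lambda)$ with $\lambda\in\Sigma_i+\mathbb Z_{\geq0}$ such that $P\,z_i^{-s}e=c(s)z_i^{-s-1}e$, where $P$ lies in the subalgebra generated by $A$, $\partial_i$ and the $D_j$. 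The multi-variable statement then follows by successively applying the one-variable operators for $i=1,\dots,m$, substituting the appropriate $s$ in each factor. The resulting $b$ is a product of polynomials with roots in $\Sigma_i+\mathbb Z_{\ge0}$, and minimality of $b_a$ gives divisibility.

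The main obstacle is the one-variable claim, that is, producing an honest factor $z_i^{-1}$ rather than merely a relation modulo $z_i$. The relation $\mu_i(D_i+s)z_i^{-s}e=z_i^{-s+1}e'$ goes in the wrong direction: it increases the power of $z_i$, whereas we need to decrease it. I would first try the dual approach. Exploit $\partial_i z_i^{-s}e=z_i^{-s-1}(-s+D_i)e$, which does lower the exponent, and then invert $(D_i-s)$ modulo the lattice $z_iE$ using $\mu_i$. Since $\mu_i(T)-\mu_i(s)$ is divisible by $T-s$, we can write $\mu_i(s)=q(D_i,s)(s-D_i)+\mu_i(D_i)$, with the last term landing in $z_iE$. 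The error term $z_i^{-s}\cdot e'$ is then handled by a descending induction, in which the exponent shift by the integer $j$ produces the $\mathbb Z_{\ge0}$ translates. Convergence or termination of this induction, which requires controlling $e'$ inside the finite free $A$-module, is the technical point I expect to require care.
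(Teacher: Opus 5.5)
\paragraph{Verdict: genuine gap.} Your architecture matches the paper's. You build a one-variable relation $P\,z_i^{-s}e=c(s)\,z_i^{-s-1}e$ from the minimal polynomial $\mu_i$ and its integer shifts $\mu_i(T-r)$, then compose these relations over $i=1,\dots,m$. However, the one-variable claim is never proved, and the step you defer (``termination'' of the descending induction) is where all the content lies.

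Your identity $\mu(s)=q(D,s)(s-D)+\mu(D)$ only gives $\mu(s)z^{-s-1}e\equiv z^{-s}e'$ modulo $\D[s]z^{-s}e$, with $ze'=\mu(D)e$ and $e'$ merely an element of $E$. Rewriting $z^{-s}e'=z^{-s-1}\mu(D)e$ is circular. Iterating with shifted polynomials only yields $\prod_{r=0}^k\mu(D-r)e\in z^{k+1}E$, and membership in $z^{k+1}E$ alone never puts the remainder into $\D[s]z^{-s}e$. What you need is that the remainder lie in $zF$, where $F=\sum_{k\geq0}A\,D^ke$. Two facts make this sufficient:
\begin{itemize}
\item $z^{-s}F\subseteq\D[s]z^{-s}e$, because $z^{-s}aD^ke=a(D+s)^kz^{-s}e$.
\item $q(s)z^{-s-1}e\equiv z^{-s-1}q(D)e$ modulo $\D[s]z^{-s}F$, because $-\partial(z^{-s}w)=z^{-s-1}(s-D)w$ and $F$ is $D$-stable.
\end{itemize}

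\paragraph{The missing idea.} A Noetherian finiteness argument closes the loop, and this is how the paper does it.
\begin{itemize}
\item $F$ and its saturation $F^{\mathrm{sat}}=F[z^{-1}]\cap E$ are finitely generated, so $z^NF^{\mathrm{sat}}\subseteq F$ for some $N$.
\item Saturation gives $F^{\mathrm{sat}}/zF^{\mathrm{sat}}\hookrightarrow E/zE$, hence $\mu(D)F^{\mathrm{sat}}\subseteq zF^{\mathrm{sat}}$.
\item Iterating, $\prod_{r=0}^N\mu(D-r)F^{\mathrm{sat}}\subseteq z^{N+1}F^{\mathrm{sat}}\subseteq zF$.
\end{itemize}
So $q(T)=\prod_{r=0}^N\mu(T-r)$ works. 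The $z$-torsion exponent $N$ of $F^{\mathrm{sat}}/F$ is exactly what makes your induction stop.

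\paragraph{The combination step.} No substitution in $s$ is needed. Your phrase ``substituting the appropriate $s$'' suggests an integer shift, which could move roots outside $\Sigma+\mathbb Z_{\geq0}$. Take $P_i\in\D_{A/B_i}[s]$, generated by $A$ and $\partial_i$, which contains $D_i$. Then $P_i$ commutes with $h_i^{-s}$, where $h_i=f/z_i$, and with $z_j^{-1}$ for $j\neq i$. Hence the relations $P_if^{-s}v_a=c_i(s)z_i^{-1}f^{-s}v_a$ compose directly with the same $s$. If you let $P_i$ involve $D_j$ for $j\neq i$, as your subalgebra allows, these commutations fail, since $D_jh_i^{-s}=h_i^{-s}(D_j-s)$. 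Your one-variable sketch only uses $A$, $\partial_i$ and $D_i$, so restricting to $\D_{A/B_i}[s]$ costs nothing.
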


\begin{remark}
  Classically, the relation between roots of $b$-functions $\lambda$ and eigenvalues $e^{2\pi i\lambda}$ of the associated monodromy operators under the Riemann--Hilbert correspondence was described by e.g. Malgrange and Kashiwara, see~\cite{Malgrange1975Bernstein,Malgrange1974Bernstein,Malgrange1983BernsteinSato,Kashiwara1983VanishingCycles}.
  We were, however, unable to find a purely algebraic proof in the literature which applies in our setting.
\end{remark}

Before giving a proof, we note the following immediate consequence.

\begin{cor}\label{cor:rationalresidues}
If the residue eigenvalues are rational, then so are the roots of every
$b_a(s)$.
\end{cor}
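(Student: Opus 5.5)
The corollary should follow directly from Theorem~\ref{thm:res-b}, so the plan is simply to combine that theorem with the definition of $\Sigma$. Interpret the hypothesis as saying that for each $i\leq m$ every root of the minimal polynomial $\mu_i(T)$ of the residue $\overline D_i$ on $E/z_iE$ lies in $\mathbb Q$, i.e.\ $\Sigma_i\subset\mathbb Q$. We should check that this is the same as saying the eigenvalues of $\overline D_i$ are rational. The roots of $\mu_i$ are exactly the eigenvalues of $\overline D_i$ in the sense of \cite[Prop.--Def.~1.24]{MR2836060}, so this is the natural reading. Under it, $\Sigma=\bigcup_i\Sigma_i\subset\mathbb Q$.

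Next, fix $a$ and let $\lambda\in\overline K$ be a root of $b_a(s)$. By Theorem~\ref{thm:res-b} we can write $\lambda=\sigma+k$ with $\sigma\in\Sigma$ and $k\in\mathbb Z_{\geq0}$. Since $\sigma\in\mathbb Q$ and $k\in\mathbb Z$, it follows that $\lambda\in\mathbb Q$. As $\lambda$ was an arbitrary root and $a$ arbitrary, every $b_a(s)$ has only rational roots.

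There is no real obstacle here; the only point needing care is the interpretation step above. If one instead insists that ``rational'' only constrains the eigenvalues of $\overline D_i$ on the fibres $E/z_iE\otimes\kappa(x)$ at points of $V(z_i)$, one would need the additional remark that the roots of $\mu_i$ occur among these pointwise eigenvalues. This holds because $\mu_i$ is minimal and its roots are constant along connected components. All of the substantive work is contained in Theorem~\ref{thm:res-b}.
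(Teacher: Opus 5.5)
Your argument is correct and is exactly the paper's: the corollary is presented as an immediate consequence of Theorem~\ref{thm:res-b}, since every root of $b_a(s)$ lies in $\Sigma+\mathbb Z_{\geq 0}$, and this set is contained in $\mathbb Q$ once $\Sigma\subset\mathbb Q$. Your interpretive discussion is harmless but not needed, because the paper defines the residue eigenvalues to be the roots $\Sigma_i$ of the minimal polynomials $\mu_i$.
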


We first treat one variable.

\begin{lem}\label{thm:res-b-1var}
Let $B$ be an affinoid $K$-algebra, $A=B\langle z\rangle$, and
$D=z\partial_z$. Let $E$ be a \emph{log-connection relative to $B$}, that is
a finite free $A$-module equipped with an integrable connection
\begin{equation*}
    \nabla\colon E\to E\frac{dz}{z}.
\end{equation*}
Let $0\neq\mu(T)\in K[T]$ be a polynomial satisfied by the residue
map
\begin{equation*}
    \overline{D}\colon E/zE\to E/zE.
\end{equation*}
and let $\Sigma\subseteq\overline{K}$ be its set of roots.

For every $v\in E$ there are $P_v(s)\in\D_{A/B}[s]$ and
$q_v(s)\in K[s]\setminus\{0\}$ such that
\[
  P_v(s)z^{-s}v=q_v(s)z^{-s-1}v,
\]
and all roots of $q_v$ lie in $\Sigma+\mathbb Z_{\geq0}$.
\end{lem}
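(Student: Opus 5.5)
The plan is to show that the lattice $L:=A[s]\,z^{-s}E=z^{-s}E[s]$, viewed inside the free $A[z^{-1},s]$-module $E[z^{-1},s]\,z^{-s}$ on which $\D_{A/B}[s]$ acts in the usual way, satisfies a lattice-level Bernstein relation whose polynomial is built from $\mu$. I would then pass from the lattice to the single element $v$.

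\textbf{Twisting rules.} I record the rules $D(z^{-s}u)=z^{-s}(D-s)u$ and $\partial_z(z^{-s}u)=z^{-s-1}(D-s)u$ for $u\in E[s]$, where $\partial_z=z^{-1}D$ on the localisation. It follows that $z^{-s}p(D,s)u=p(D+s,s)\,z^{-s}u$ for every polynomial $p$.

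\textbf{Lattice relation.} Write $\mu(s)-\mu(T)=(s-T)R(T,s)$ with $R\in K[T,s]$. Since $\mu(\overline D)=0$ on $E/zE$, for every $u\in E$ we have $\mu(s)u=(s-D)R(D,s)u+zu'$ with $u'\in zE[s]\cdot z^{-1}=E[s]$. Multiplying by $z^{-s-1}$ and using the twisting rules gives
\[
\mu(s)\,z^{-s-1}u=-\partial_z\,R(D+s,s)\,z^{-s}u+z^{-s}u'.
\]
Hence $\mu(s)\,z^{-1}L\subseteq \D_{A/B}[s]\cdot L$, and the polynomial $\mu(s)$ has roots exactly $\Sigma$.

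\textbf{From the lattice to $v$.} The difficulty is the error term $z^{-s}u'$: it lies in $L$ but not obviously in $\D_{A/B}[s]z^{-s}v$. To control it, I would use the $z$-adic filtration. On $z^kE/z^{k+1}E$ the operator $D$ acts as $\overline D+k$, because $D(z^ke)=z^k(D+k)e$. Therefore $\prod_{k=0}^{N-1}\mu(D-k)$ maps $E$ into $z^NE$. I would first reduce to the case where $v$ generates $E$ over $\D_{A/B}$ up to a power of $z$, i.e.\ $z^NE\subseteq \D_{A/B}v$ for some $N$. For $v$ whose image in $E/zE$ is nonzero, I expect this to follow from the log-connection structure and Nakayama over the ring $A$. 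A general $v\in z^kE\setminus z^{k+1}E$ would be handled by writing $v=z^kv_0$ and noting $z^{-s}v=z^{-(s-k)}v_0$. This substitution shifts the roots by $k\in\mathbb Z_{\ge0}$, which is exactly where the $\mathbb Z_{\geq0}$ in the statement comes from.

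\textbf{Closing the induction.} With the reduction in place, I iterate the lattice identity. Each pass either pushes the error term one step deeper in the $z$-filtration, multiplying the accumulated polynomial by a factor $\mu(s-k)$, or absorbs it into $\D_{A/B}[s]z^{-s}v$ via $z^NE\subseteq\D_{A/B}v$. After finitely many passes one obtains $P_v(s)$ and $q_v(s)=\prod_k\mu(s-k)$ up to a shift, with roots in $\Sigma+\mathbb Z_{\ge0}$ as required. Once the relation for $z^{-s}v$ holds with some such $q_v$, the existence statement is proved. The main obstacle is this absorption of the error terms $z^{-s}u'$, and in particular checking that the signs of the integer shifts come out as $\Sigma+\mathbb Z_{\geq0}$ rather than $\Sigma-\mathbb Z_{\geq0}$. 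My first attempt would be to carry out the bookkeeping in the rank-one model $E=A$ with $D=z\partial_z+\alpha$, verify the signs there, and then run the same filtration argument in general.
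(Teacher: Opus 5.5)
\paragraph{The gap.} The step ``From the lattice to $v$'' fails. You reduce to the claim that $z^NE\subseteq\D_{A/B}v$ for some $N$ whenever $v\notin zE$, but this is false once $\operatorname{rk}E\geq 2$. Take $B=K$, $E=Ae_1\oplus Ae_2$ with $De_1=De_2=0$, and $v=e_1$. Then both $\D_{A/B}v$ and $\sum_k AD^kv$ equal $Ae_1$, which contains no $z^Ne_2$. The sub-log-connection generated by $v$ can be a proper direct summand of $E$, so no Nakayama-type argument can give the inclusion.

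Even where such an inclusion holds, $\D_{A/B}v$ is the wrong module for absorbing error terms. What you need is $z^{-s}w\in\D_{A/B}[s]z^{-s}v$. This holds for $w\in F:=\sum_{k\geq0}AD^kv$, because $z^{-s}aD^kv=a(D+s)^kz^{-s}v$. It fails for general $w\in\D_{A/B}v$. For example, take $E=Ae$ with $De=\alpha e$, $\alpha\in K^\times$. Then $z^{-1}e=\alpha^{-1}\partial_ze\in\D_{A/B}e$. But every element of $\D_{A/B}[s]z^{-s}e$ has the form $\sum_ka_k(s)\prod_{j<k}(\alpha-s-j)\,z^{-k}\cdot z^{-s}e$ with $a_k\in A[s]$. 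Setting $s=\alpha$ shows that $z^{-s-1}e$ is not of this form.

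For the same reason, your iteration does not close as written. Feeding the error terms $u'=z^{-1}\mu(D)u\in E$ back into the lattice relation produces terms $\partial_zR(D+s,s)z^{-s}u'$. These need not lie in $\D_{A/B}[s]z^{-s}v$, since $u'$ need not lie in $F$.

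\paragraph{The fix.} The missing idea, which is how the paper argues, is to work inside $F$ and its saturation $F^{\mathrm{sat}}=F[z^{-1}]\cap E$ rather than inside $E$.
Since $F^{\mathrm{sat}}\cap zE=zF^{\mathrm{sat}}$, the module $F^{\mathrm{sat}}/zF^{\mathrm{sat}}$ embeds $D$-equivariantly into $E/zE$, so $\mu$ still kills the residue there. Finite generation of $F^{\mathrm{sat}}$ over the Noetherian ring $A$ gives $z^NF^{\mathrm{sat}}\subseteq F$ for some $N$. This Artin--Rees type inclusion is the correct replacement for your ``$z^NE\subseteq\D_{A/B}v$''.

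Now set $q_v(T)=\prod_{r=0}^N\mu(T-r)$. On the $r$-th graded piece of the $z$-adic filtration of $F^{\mathrm{sat}}$, $D$ acts as $\overline D+r$. Hence $q_v(D)v\in F\cap z^{N+1}F^{\mathrm{sat}}\subseteq zF$. The identity $-\partial_z(z^{-s}w)=z^{-s-1}(s-D)w$ for $w\in F$ then gives $q_v(s)z^{-s-1}v\equiv z^{-s-1}q_v(D)v\equiv 0$ modulo $\D_{A/B}[s]z^{-s}F=\D_{A/B}[s]z^{-s}v$.

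So your iteration does close once you keep the error term in the form $z^{-s-1}w$ with $w=\prod_r\mu(D-r)v\in F$. This treats every $v$ at once, so the substitution $v=z^kv_0$ is unnecessary. The integer shifts come out in $\Sigma+\mathbb Z_{\geq0}$, with the sign you expected.
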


\begin{proof}
Let $F\subseteq E$ be the $A$-submodule generated by $v,Dv,D^2v,\dots$.  It is finitely generated over $A$ by Noetherianity
and $D$-stable. Its $z$-saturation
\[
  F^{\mathrm{sat}}=(F\otimes_AA[z^{-1}])\cap E
\]
is likewise finitely generated, so that $z^NF^{\mathrm{sat}}\subseteq F$ for some $N\geq0$.  Hence
\begin{equation}\label{eq:saturation}
  F\cap z^{N+1}F^{\mathrm{sat}}\subseteq zF.
\end{equation}

Put
\[
  q_v(T)=\prod_{r=0}^N\mu(T-r).
\]
On $z^rF^{\mathrm{sat}}/z^{r+1}F^{\mathrm{sat}}$, the operator $D$ is the
residue plus $r$. The filtration by these subquotients therefore gives
\[
  q_v(D)F^{\mathrm{sat}}\subseteq z^{N+1}F^{\mathrm{sat}}.
\]
Since $q_v(D)v\in F$, equation~\eqref{eq:saturation} yields
$q_v(D)v\in zF$.

For $w\in F$ one has
\[
  -\partial_z(z^{-s}w)=z^{-s-1}(sw-Dw)\in \D_{A/B}[s]\,z^{-s}F\cap z^{-s-1}F.
\]
Therefore, modulo $\D_{A/B}[s]\,z^{-s}F\cap z^{-s-1}F$, 
\[
z^{-s-1}sw-z^{-s-1}Dw\equiv 0
\]
for $w\in F$.
 
Consequently,
\[
  q_v(s)z^{-s-1}v\equiv z^{-s-1}q_v(D)v\equiv0
\]
by the above, so that $q_v(s)z^{-s-1}v\in \D_{A/B}[s]\,z^{-s}F$.

Moreover, we observe
\begin{equation*}
    \D_{A/B}[s]z^{-s}v=z^{-s}F.
\end{equation*}
In fact, to show $\subseteq$, let $x\in z^{-s}F$. Then
$x$ is a finite sum of elements $z^{-s}aD^iv\in E$, where
$a\in A$. The commutation relation $z^{-s}D^i=(D+s)^i z^{-s}$ yields
\[
    z^{-s}aD^iv
    =a(D+s)^i z^{-s}v
    =\left(\sum_{j=0}^{i}\binom{i}{j}s^{i-j}aD^j\right)z^{-s}v
    \in \D_{A/B}[s]z^{-s}v.
\]
This implies $x\in\D_{A/B}[s]z^{-s}v$, as desired. The proof of $\supseteq$ is similar.

Therefore, $q_v(s)z^{-s-1}v\in \D_{A/B}[s]\,z^{-s}F$, yielding the required $P_v(s)$. The roots of $q_v$ are the numbers
$\lambda+r$ with $\lambda\in\Sigma$ and $0\leq r\leq N$.
\end{proof}

\begin{proof}[Proof of Theorem~\ref{thm:res-b}]
For \(1\leq j\leq m\), write
\[
  A=B_j\langle z_j\rangle,
  \qquad
  B_j=K\langle z_1,\dots,\widehat z_j,\dots,z_n\rangle,
  \qquad
  h_j=f/z_j.
\]
Applying Lemma~\ref{thm:res-b-1var} relative to \(B_j\)
and $\mu_{j}$ playing the role of $\mu$, we obtain
\[
  P_{a,j}(s)\in\D_{A/B_j}[s],
  \qquad
  q_{a,j}(s)\in K[s]\setminus\{0\},
\]
such that
\[
  P_{a,j}(s)z_j^{-s}v_a
  =
  q_{a,j}(s)z_j^{-s-1}v_a,
\]
and every root of \(q_{a,j}\) belongs to
\(\Sigma_j+\mathbb Z_{\geq0}\).

Since \(P_{a,j}(s)\) differentiates only in the \(z_j\)-direction, it commutes with the formal factor \(h_j^{-s}\).
Multiplying the preceding identity by \(h_j^{-s}\) therefore gives
\begin{equation}\label{eq:directional-b-relation}
  P_{a,j}(s)f^{-s}v_a
  =
  q_{a,j}(s)z_j^{-1}f^{-s}v_a.
\end{equation}
Moreover, if \(i\neq j\), every element of
\(\D_{A/B_j}[s]\) commutes, after localization at \(f\), with
multiplication by \(z_i^{-1}\). Indeed, such an operator is generated
by multiplication by elements of \(A\) and differentiation in the
\(z_j\)-direction, while \(\partial_j(z_i^{-1})=0\).

Set \(u=f^{-s}v_a\). We claim that, for \(1\leq r\leq m\),
\begin{equation}\label{eq:successive-directional-relations}
  P_{a,r}(s)\cdots P_{a,1}(s)u
  =
  \left(\prod_{j=1}^r q_{a,j}(s)\right)
  \left(\prod_{j=1}^r z_j^{-1}\right)u.
\end{equation}
For \(r=1\), this is~\eqref{eq:directional-b-relation}. If it holds
for some \(r<m\), then the preceding commutation property and
\eqref{eq:directional-b-relation} for \(j=r+1\) give
\[
\begin{aligned}
  P_{a,r+1}(s)\cdots P_{a,1}(s)u
  &=
  \left(\prod_{j=1}^r q_{a,j}(s)\right)
  P_{a,r+1}(s)
  \left(\prod_{j=1}^r z_j^{-1}\right)u \\
  &=
  \left(\prod_{j=1}^r q_{a,j}(s)\right)
  \left(\prod_{j=1}^r z_j^{-1}\right)
  P_{a,r+1}(s)u \\
  &=
  \left(\prod_{j=1}^{r+1}q_{a,j}(s)\right)
  \left(\prod_{j=1}^{r+1}z_j^{-1}\right)u.
\end{aligned}
\]
This proves the claim by induction.

Taking \(r=m\) in~\eqref{eq:successive-directional-relations} and using
\(\prod_{j=1}^m z_j^{-1}=f^{-1}\), we obtain
\[
  P_{a,m}(s)\cdots P_{a,1}(s)f^{-s}v_a
  =
  \left(\prod_{j=1}^m q_{a,j}(s)\right)f^{-s-1}v_a.
\]
Thus the $b$-function \(b_a(s)\) divides \(\prod_{j=1}^m q_{a,j}(s)\).
Every root of \(b_a(s)\) is therefore
a root of some \(q_{a,j}(s)\), and hence belongs to
\[
  \bigcup_{j=1}^m
  \left(\Sigma_j+\mathbb Z_{\geq0}\right)
  =
  \Sigma+\mathbb Z_{\geq0}. \qedhere
\]
\end{proof}

\begin{cor}\label{postypeext}
	Let $E$ be a logarithmic connection over $A$ along $V(f)$ such that all residue eigenvalues are of positive type (see \cite[Definition 13.1.1]{KedlayapDE}). If $\E_U$ denotes the associated integrable connection on $U=X\setminus V(f)$ and $j: U\to X$, then $j_+\E_U=j_*\E_U$ is a coadmissible, weakly holonomic $\Dcap_X$-module.
\end{cor}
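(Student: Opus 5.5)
Recall Kedlaya's notion: an element of $\overline K$ is of positive type if it is not a $p$-adic Liouville number and is not in $\mathbb{Z}_{<0}$, or similar non-Liouville condition ensuring good behaviour of differences. The plan is to reduce the statement to the $b$-function criterion of Bitoun and the first author \cite{Bitoun}. That criterion says, roughly, that if $\M$ is a meromorphic connection along $V(f)$ generated by elements whose $b$-functions have roots avoiding certain bad sets ($p$-adic Liouville-type behaviour, or integer differences), then $j_*(\M|_U)$ is coadmissible and weakly holonomic.

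\textbf{Step 1: $j_+\E_U=j_*\E_U$.} This needs no residue condition. Since $U=X\setminus V(f)$ is the complement of a Cartier divisor in an affinoid, $j$ is an affinoid morphism. Coherent (here free) modules therefore have vanishing higher direct images, so $Rj_*\E_U=j_*\E_U$. As $j$ is an open immersion, $j_+$ agrees with $Rj_*$ by the formalism of \cite{6Op}. Hence $j_+\E_U=j_*\E_U$, concentrated in degree zero.

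\textbf{Step 2: control of $b$-functions.} Set $\M:=E[f^{-1}]$, the meromorphic connection with $\M|_U=\E_U$. For the fixed basis $v_a$ of $E$, Theorem~\ref{thm:res-b} places every root of $b_a(s)$ in $\Sigma+\mathbb{Z}_{\ge0}$, where $\Sigma$ is the set of residue eigenvalues. Positive type is preserved under adding nonnegative integers in the relevant sense of \cite[Definition 13.1.1]{KedlayapDE}, since it only constrains non-Liouvilleness and differences. So all roots of all $b_a$ are of positive type. Equivalently, with the sign convention $f^{-s}$ used in \eqref{eq:b-function}, they avoid exactly the sets that \cite{Bitoun} excludes.

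\textbf{Step 3: apply the Bitoun--Bode criterion.} The $v_a$ generate $\M$ over $\O_X[f^{-1}]$. Their $b$-functions have roots of positive type, hence are not Liouville and do not produce the pathology of \cite{Bitoun}. That theorem then gives that the $\Dcap_X$-module $j_*\E_U$ is coadmissible and weakly holonomic; it should be realized as $\Dcap_X\otimes_{\D_X}\D_X[s]f^{-s}v\,|_{s=k}$ for suitable integer shifts $k$. Concretely, one shows $\M$ is generated over $\D_X$ by $f^{-k}v_a$ for $k\gg0$, by specializing $b_a(s)$ at integers where it does not vanish. One then checks that the completion is all of $j_*\E_U$.

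\textbf{Main obstacle.} The hard part is matching hypotheses precisely. Kedlaya's positive type condition must be shown to be stable under $+\mathbb{Z}_{\ge0}$ and to imply the exact root condition demanded in \cite{Bitoun}, with the correct sign convention for $s$. I would first unwind both definitions in one variable. There, the needed estimates amount to the convergence of $\prod 1/(\lambda+r)$-type coefficients, which is exactly the non-Liouville property.
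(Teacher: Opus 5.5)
Your route is the paper's: Theorem~\ref{thm:res-b} puts all roots of the $b_a$ in $\Sigma+\mathbb{Z}_{\geq0}$, positive type is stable under adding nonnegative integers, and \cite[Theorem~1.2]{Bitoun} is applied to the basis $v_1,\dots,v_d$. Three points need correcting, though none changes the strategy.

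First, your definition of positive type is wrong. With the actual definition, your ``main obstacle'' is a one-line check. As the paper recalls, $\lambda\in\overline K$ is of positive type if $\sum_{n\geq0,\,n\neq\lambda}x^n/(\lambda-n)$ has positive radius of convergence. For instance, every integer qualifies, negative or not, so ``not in $\mathbb{Z}_{<0}$'' plays no role. For $m\in\mathbb{Z}_{\geq0}$ the substitution $n=n'+m$ gives
\[
\sum_{n\geq0,\,n\neq\lambda+m}\frac{x^n}{\lambda+m-n}
=\sum_{0\leq n<m,\,n\neq\lambda+m}\frac{x^n}{\lambda+m-n}
+x^m\sum_{n'\geq0,\,n'\neq\lambda}\frac{x^{n'}}{\lambda-n'},
\]
so $\lambda+m$ has the same radius as $\lambda$. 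This is exactly the hypothesis of \cite[Theorem~1.2]{Bitoun}, so nothing further needs to be matched. The estimates you allude to belong to the proof of that theorem, which you simply cite.

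Second, in Step~1 the map $j$ is not an affinoid morphism. $U=X\setminus V(f)$ is not affinoid (already the punctured disc is not); it is only quasi-Stein. The vanishing of $R^qj_*\E_U$ for $q>0$ holds because $W\cap U$ is quasi-Stein for every affinoid $W\subseteq X$, so Kiehl's Theorem~B applies. This is how the paper argues in Corollaries~\ref{extendablehol} and~\ref{cor:nilpotent-log-holonomic}.

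Third, the paper does not read weak holonomicity directly off \cite[Theorem~1.2]{Bitoun}. That theorem supplies coadmissibility together with the isomorphism $j_*\E_U\cong\Dcap_X\otimes_{\D_X}(\O_X(*D)\otimes_AE)$. Weak holonomicity then needs two further facts:
\begin{enumerate}
\item the meromorphic connection $\O_X(*D)\otimes_AE$ is holonomic as a $\D_X$-module, by the argument of \cite[Proposition~4.1.4]{MNM};
\item the resulting Ext-vanishing transfers to $\Dcap_X$ along this base change, as in \cite[Proposition~7.2]{Dcapthree}.
\end{enumerate}
You already have the right base-change description of $j_*\E_U$. Add this step explicitly rather than attributing weak holonomicity to the Bitoun--Bode criterion itself.
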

\begin{proof}
	Recall from \cite[Definition 13.1.1]{KedlayapDE} that $\lambda\in \overline{K}$ is of positive type if the formal power series
	\begin{equation*}
		\sum_{n\geq 0, n\neq \lambda} \frac{x^n}{\lambda-n}
	\end{equation*}
	has positive radius of convergence. In particular, if $\lambda$ is of positive type, then so is $\lambda+n$ for any $n\in \mathbb{Z}_{\geq 0}$. 
	
	Hence Theorem \ref{thm:res-b} implies that $E$ admits a basis $v_1, \hdots, v_d$ whose associated $b$-functions have all their roots of positive type. It follows from \cite[Theorem 1.2]{Bitoun} that $j_+\E_U=j_*\E_U$ is a coadmissible $\Dcap_X$-module.
	
	Moreover, \cite[Theorem 1.2]{Bitoun} yields in this case that
	\begin{equation*}
		j_*\E_U\cong \Dcap_X\otimes_{\D_X} (\O_X(*D)\otimes_AE),
	\end{equation*}
	so that the weak holonomicity follows from the same argument as in \cite[Proposition 4.1.4]{MNM} and \cite[Proposition 7.2]{Dcapthree}.
\end{proof}

\section{Logarithmic connections with nilpotent residues}
\label{sec:nilpotent-residues}

Suppose $j:U\to X$ is a Zariski-open immersion of rigid analytic $K$-varieties and $\E$ is an integrable connection on $U$. The most straightforward way to ensure that $j_+\E$ is a holonomic complex is to require that there exists some integrable connection $\M$ on $X$ such that $\M|_U\cong \E$, where we can invoke \cite[Theorem 5.10.(iv), Theorem 5.12]{Hol}, as $j_+\E\cong j_+j^!\M$ in this case. More generally, we can extend this argument to allow for those $\E$ which admit a filtration such that each successive quotient has this property. 

In this section, we show that any log-connection with nilpotent residues admits such a filtration locally. We remark that the question of extending log-connections with nilpotent residues to connections in this way is usually related to the phenomenon of unipotence, see \cite{MR2360314} for a similar discussion. Our approach here is very similar, but we emphasize that we only describe our log-connections locally as the restrictions of \emph{some} integrable connection, not necessarily as the structure sheaf. Still, our methods are very close to those in \cite{MR2360314}.   

Following this perspective, the key point of this section can also be formulated as follows: by standard arguments, each point admits an open neighbourhood where our given log-connection with nilpotent residues is unipotent, in the sense that it admits a filtration such that each graded piece trivializes (see e.g.~\cite[Lemma 3.2.17]{MR2360314}). However, as is common in this setting, it is not clear that these open neighbourhoods form an \emph{admissible} covering. By weakening our desired notion of extendability, we are able to control the radius of convergence of suitable base-change matrices to arrive at an admissible covering. In fact, it is possible to derive explicit bounds for the radii analogously to \cite[Proposition 18.1.1]{KedlayapDE}, but we do not spell this out here.  

\subsection{A filtration criterion for holonomicity}

Fix $n\geq m$, let
$I^+\sqcup I^-=\{1,\dots,m\}$, and choose radii $0<r_i\leq1$ for $i\in P$.  Set
\begin{align*}
 A^+&:=K\langle r_i^{-1}z_i:i\in I^+\rangle, \\
 A^-&:=K\langle z_i\ \colon i\in I^-, \ z_{m+1},\dots,z_{n} \rangle, \\
 A&:=A^+\widehat\otimes_KA^-.
\end{align*}

Throughout this subsection, we will have the following assumptions in place:

Let $E$ be finite free of rank $d$ over $A$, equipped with commuting
logarithmic derivations $D_i=z_i\partial_i$ for $i\in I^+$. Suppose that, in
some basis $v_1,\dots,v_d$, their matrices have entries in $A^-$ and are
nilpotent. We remark that in this case, the residue $\overline{D_i}$ is nilpotent, as $D_i$ commutes with $z_j$ for each $j\neq i$.

Write $E^-=\sum_jA^-v_j$ and
\[
 A_{\widehat i}
 =A^-\widehat\otimes_K
 K\langle r_j^{-1}z_j:j\in I^+\setminus\{i\}\rangle.
\]

\begin{lem}\label{kernelcontrol}
\leavevmode
\begin{itemize}
\item[(i)] For $i\in I^+$ and $l\geq1$,
\[
 \ker(D_i^l)
 =A_{\widehat i}\otimes_{A^-}\ker\bigl(D_i|_{E^-}^l\bigr).
\]
\item[(ii)] For every $l\geq 1$
\[
 \bigcap_{i\in I^+}\ker(D_i^l)
 =\bigcap_{i\in I^+}\ker\bigl(D_i|_{E^-}^l\bigr)
\]
\item[(iii)] In particular,
\[
 \bigcap_{i\in I^+}\ker(D_i^d)=E^-.
\]
\item[(iv)] Every $A$-linear endomorphism of $E$ commuting with the $D_i$ preserves
$E^-$.
\end{itemize}
\end{lem}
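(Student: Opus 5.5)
The plan is to make everything explicit in the chosen basis. Write $D_i=z_i\partial_i+N_i$ on $E=\bigoplus_j Av_j\cong A^d$. Here $z_i\partial_i$ acts coefficientwise, and $N_i\in M_d(A^-)$ is the nilpotent matrix of $D_i$ in the basis $v_1,\dots,v_d$. Since $A=A_{\widehat i}\langle r_i^{-1}z_i\rangle$, every $e\in E$ has a unique convergent expansion $e=\sum_{k\geq0}z_i^ke_k$ with $e_k\in A_{\widehat i}^d$. The derivation $z_i\partial_i$ kills $A_{\widehat i}$, and $N_i$ has entries in $A^-\subseteq A_{\widehat i}$. Hence $D_i$ acts diagonally on this expansion:
\[
D_i\Bigl(\sum_k z_i^ke_k\Bigr)=\sum_k z_i^k(k+N_i)e_k,
\qquad
D_i^l\Bigl(\sum_k z_i^ke_k\Bigr)=\sum_k z_i^k(k+N_i)^le_k .
\]
This holds termwise, because $D_i$ is continuous and $A_{\widehat i}$-linear up to the $z_i$-grading.

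For (i), I use that $N_i$ is nilpotent and $K$ has characteristic $0$. For $k\geq1$ the matrix $k+N_i$ is therefore invertible, with inverse a polynomial in $N_i$. So $D_i^le=0$ forces $e_k=0$ for all $k\geq1$, leaving $e=e_0\in A_{\widehat i}^d$ with $N_i^le_0=0$. Thus $\ker(D_i^l)=\ker(N_i^l\colon A_{\widehat i}^d\to A_{\widehat i}^d)$.

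Likewise $\ker(D_i|_{E^-}^l)=\ker(N_i^l|_{(A^-)^d})$. This uses that $D_i$ preserves $E^-$, since $z_i\partial_i$ kills $A^-$ and $N_i$ has entries in $A^-$.

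It remains to show that forming the kernel of the $A^-$-matrix $N_i^l$ commutes with the base change $A^-\to A_{\widehat i}$. I expect this to be the only real point. I would argue via flatness of $A_{\widehat i}=A^-\langle r_j^{-1}z_j : j\in I^+\setminus\{i\}\rangle$ over $A^-$; relative Tate algebras over an affinoid are flat.

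A more hands-on alternative is also available. Expand elements of $A_{\widehat i}^d$ as convergent series in the remaining $z_j$ with coefficients in $(A^-)^d$. Since $N_i^l$ acts coefficientwise, its kernel consists of the convergent series with coefficients in $\ker(N_i^l|_{(A^-)^d})$. This kernel is a finitely generated, hence closed, submodule of $(A^-)^d$. One then identifies this series module with $A_{\widehat i}\otimes_{A^-}\ker$, using that for finite modules over a Noetherian Banach algebra the completed and ordinary tensor products agree.

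For (ii), take $e$ in all the $\ker(D_i^l)$. By the argument above applied to each $i\in I^+$, the expansion of $e$ has no monomial with a positive power of any $z_i$, $i\in I^+$. Hence all coefficients lie in $A^-$, i.e.\ $e\in E^-$, and then $N_i^le=0$, i.e.\ $e\in\ker(D_i|_{E^-}^l)$, for every $i$. The reverse inclusion is clear.

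For (iii), $N_i$ is a nilpotent $d\times d$ matrix, so $N_i^d=0$. Hence $\ker(D_i|_{E^-}^d)=E^-$ for each $i$, and (ii) gives the claim.

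For (iv), an $A$-linear $\varphi$ commuting with all $D_i$ also commutes with the $D_i^d$. It therefore preserves $\bigcap_i\ker(D_i^d)$, which is $E^-$ by (iii).
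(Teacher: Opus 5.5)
Your proposal is correct and follows essentially the same route as the paper: expand in $z_i$, use invertibility of $\nu+N_i$ for $\nu\geq 1$ to kill all positive $z_i$-coefficients, compare kernels coefficientwise in the remaining variables, and then deduce (ii)--(iv) exactly as you do. For the base-change step the paper uses what you call the ``hands-on alternative'' rather than flatness. The one step to justify explicitly is $N_i^d=0$, which the paper obtains from Cayley--Hamilton over the fraction field of the domain $A^-$; nilpotence alone does not bound the index by $d$ over an arbitrary ring.
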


\begin{proof}
(i) For $i\in I^+$, set
$N_i:=D_i|_{E^-}$.
The matrix of $\overline{D_i}$ is exactly the matrix of $N_i$ relative to the basis $v_1, \hdots, v_d$, as we assume each entry to be in $A^-$. Hence $N_i$
is nilpotent. Furthermore, if $L$ denotes the fraction field of $A^-$,
then $N_i$ is a nilpotent endomorphism of the $d$-dimensional $L$-vector
space $L\otimes_{A^-}E^-$.
Cayley--Hamilton therefore gives
\begin{equation}\label{eq:nilpotence-Ni}
  N_i^d=0.
\end{equation}

Fix $l\geq1$. Every $e\in E$ has a unique expansion
\begin{equation*}
  e=\sum_{\nu\geq0}z_i^\nu e_\nu,
  \qquad
  e_\nu\in A_{\widehat i}\otimes_{A^-}E^-.
\end{equation*}
The operator $D_i$ is $A_{\widehat i}$-linear and restricts to $N_i$ on
$E^-$. The Leibniz rule consequently gives
\begin{equation*}
  D_i^l(e)
  =
  \sum_{\nu\geq0}
  z_i^\nu\bigl(\nu\operatorname{id}+N_i\bigr)^l e_\nu.
\end{equation*}
For every $\nu\geq1$, the endomorphism
$\nu\operatorname{id}+N_i$ is invertible. Indeed, by
\eqref{eq:nilpotence-Ni}, its inverse is
\begin{equation*}
  \bigl(\nu\operatorname{id}+N_i\bigr)^{-1}
  =
  \nu^{-1}
  \sum_{k=0}^{d-1}(-\nu^{-1}N_i)^k.
\end{equation*}
It follows from the uniqueness of the $z_i$-expansion that, if
$D_i^l(e)=0$, then
\begin{equation*}
  e_\nu=0\quad(\nu\geq1),
  \qquad
  N_i^l(e_0)=0.
\end{equation*}
Expanding $e_0$ in the remaining variables $z_j$, $j\in I^+\setminus\{i\}$,
shows coefficientwise that
\begin{equation*}
  \ker\bigl(N_i^l:
  A_{\widehat i}\otimes_{A^-}E^-
  \to
  A_{\widehat i}\otimes_{A^-}E^-\bigr)
  =
  A_{\widehat i}\otimes_{A^-}\ker(N_i^l).
\end{equation*}
We therefore obtain
\begin{equation*}
  \ker(D_i^l)
  =
  A_{\widehat i}\otimes_{A^-}
  \ker\bigl(D_i|_{E^-}^l\bigr).
\end{equation*}

(ii) Now write an element of $E$ in the form
\begin{equation*}
  e=\sum_{\alpha\in\mathbb N^{I^+}}z^\alpha e_\alpha,
  \qquad e_\alpha\in E^-.
\end{equation*}
If $e\in\ker(D_i^l)$, the formula just proved shows that $e$ is independent
of $z_i$; i.e., $e_\alpha=0$ whenever $\alpha_i>0$. Thus an element
belonging to $\ker(D_i^l)$ for every $i\in I^+$ is independent of all the
variables $z_i$, $i\in I^+$, and hence lies in $E^-$. On $E^-$, the operator
$D_i$ agrees with $N_i$, so
\begin{equation*}
  \bigcap_{i\in I^+}\ker(D_i^l)
  =
  \bigcap_{i\in I^+}
  \ker\bigl(D_i|_{E^-}^l\bigr).
\end{equation*}

(iii) Taking $l=d$ and using \eqref{eq:nilpotence-Ni} gives $\bigcap_{i\in I^+}\ker(D_i^d)=E^-$.

(iv) Finally, let $\varphi\colon E\to E$ be an $A$-linear endomorphism commuting
with every $D_i$. For $e\in\ker(D_i^d)$, we have
\begin{equation*}
  D_i^d\varphi(e)=\varphi D_i^d(e)=0.
\end{equation*}
Thus $\varphi$ preserves each $\ker(D_i^d)$ and consequently preserves their
intersection $E^-$.
\end{proof}

Let $X'=\Sp B$ be an affinoid subdomain of $\Sp A$ on which every $z_i$,
$i\in I^-$, is invertible. Put
\[
 D^+=V\left(\prod_{i\in I^+}z_i\right)\cap X',
 \qquad U'=X'\setminus D^+,
\]
and let $\cE_{U'}$ be the connection induced by $E$ on $U'$.

	\begin{prop}\label{extensionofextendables}
		There exists a finite filtration by sub-connections
		\begin{equation*}
			0\subsetneq \cE_1\subsetneq \cE_2\subsetneq \hdots \subsetneq \cE_k=\cE_{U'}
		\end{equation*}
		such that for each $s$, there exists some integrable connection $\N_s$ on $X'$ with
		\begin{equation*}
			\N_s|_{U'}\cong \cE_s/\cE_{s-1}.
		\end{equation*}
	\end{prop}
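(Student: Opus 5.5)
Throughout, let $N_i$ denote the (constant, $A^-$-valued, nilpotent) matrix of $D_i$ in the basis $v_1,\dots,v_d$, for $i\in I^+$. I would build the filtration from the simultaneous nilpotence of the commuting operators $N_i$ on $E^-$. For $l\geq 0$ set
\[
 E^-_l:=\{e\in E^-\colon N_{i_1}\cdots N_{i_l}e=0 \text{ for all } i_1,\dots,i_l\in I^+\},
\]
so that $E^-_0=0$ and $E^-_{k}=E^-$ for $k\geq |I^+|(d-1)+1$. Each $E^-_l$ is an $A^-$-submodule stable under all $N_i$, and $N_i(E^-_l)\subseteq E^-_{l-1}$. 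On $X'$ every $z_i$ with $i\in I^-$ is a unit, so these submodules become sub-objects for all derivations after base change. Concretely, $E$ carries $\partial_i$ for $i\in I^-$ and $i>m$, and these commute with the $D_i$, $i\in I^+$. I therefore expect the $E^-_l$ to be stable under them as well, via Lemma~\ref{kernelcontrol}(iv)-type arguments applied to the operators $\partial_i$ on $\bigcap_i\ker(D_i^d)=E^-$ from Lemma~\ref{kernelcontrol}(iii). Then $\cE_l:=\bigl(B\otimes_{A^-}E^-_l\bigr)|_{U'}$ are sub-connections.

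The first issue is that $E^-_l$ need not be a locally free $A^-$-module, nor a direct summand of $E^-$, so the quotients might fail to be vector bundles. To fix this, I would replace $E^-_l$ by its saturation in $E^-$, i.e.\ the preimage of the torsion of $E^-/E^-_l$. This is still stable under the $N_i$ and the derivations, since $A^-$ is a domain and these operators are $A^-$-linear or derivations. Over the smooth affinoid $\Sp A^-$, a finitely generated module with integrable connection is automatically locally free. Since each $E^-_l$ is stable under the horizontal derivations $\partial_i$ ($i>m$, and $z_i\partial_i$ for $i\in I^-$, which become genuine connections after restricting to where $z_i$, $i\in I^-$, is invertible), the quotients are locally free on the relevant locus. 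Strictness of the inclusions is then obtained by discarding repetitions.

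Next I construct $\N_s$. On the graded piece $G_s:=E^-_s/E^-_{s-1}$ all $N_i$ act by zero. Hence on $B\otimes_{A^-}G_s$ the induced log-connection has $D_i=z_i\partial_i$ acting as $z_i\partial_i$ in the coordinate directions $i\in I^+$, with the basis from $G_s$ horizontal in those directions. Thus $\N_s:=B\otimes_{A^-}G_s$, with connection $\partial_i$ acting coefficientwise for $i\in I^+$ and with the existing derivations from $A^-$, is an honest integrable connection on $X'$ whose restriction to $U'$ is $\cE_s/\cE_{s-1}$. Note that the full $E=A\otimes_{A^-}E^-$ splits as $A^+$-coefficients times $E^-$, so the quotient identifies with $A\otimes_{A^-}G_s$ with $D_i$ acting as $z_i\partial_{z_i}$ on $A^+$.

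I expect the main obstacle to be the compatibility of the filtration with the derivations in the $A^-$-directions. The connection on $E$ in those directions is not assumed to have matrix in $A^-$, so one must show that these derivations preserve $E^-$ and its subfiltration. The approach I would try is the following: a derivation $\partial$ commuting with all $D_i$ maps $\bigcap_i\ker D_i^d$ into itself, because $D_i^d\partial=\partial D_i^d$. Lemma~\ref{kernelcontrol}(iii) then gives $\partial(E^-)\subseteq E^-$. The same commutation gives $\partial(E^-_l)\subseteq E^-_l$, using $N_{i_1}\cdots N_{i_l}\partial=\partial N_{i_1}\cdots N_{i_l}$ on $E^-$.
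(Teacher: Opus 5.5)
Your proposal is correct and is essentially the paper's proof: it uses the same filtration by common kernels of length-$l$ products of the $N_i$, the same argument that the remaining connection operators preserve $E^-=\bigcap_{i\in I^+}\ker(D_i^d)$ and each filtration step because they commute with the $D_i$, and the same $\N_s=B\otimes_{A^-}G_s$ with $\partial_i$, $i\in I^+$, acting on coefficients, which is locally free because it carries an integrable connection on $X'$. Your saturation step is harmless but vacuous, since each $E^-_l$ is the kernel of an $A^-$-linear map into a free module and so $E^-/E^-_l$ is already torsion-free; the one ingredient you leave implicit is the flatness of $A^-\to B$, which the paper uses to see that $B\otimes_{A^-}E^-_l$ is a submodule of $B\otimes_AE$.
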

	
\begin{proof}
The proof strategy (which may be considered a relative trivialization) is a common one in the study of unipotent connections: within the logarithmic connection $B\otimes_A E$, the intersection
\begin{equation*}
    \bigcap_{i\in I^+}\ker(D_i)\subseteq E^-
\end{equation*}
spans a $B$-submodule on which $D_i$, and hence $\partial_i$, acts trivially for $i\in I^+$, and which is stable under $\partial_i$ for each $i\in I^-$ (it is stable under $D_i=z_i\partial_i$ by commutation relations, and $z_i\in B^\times$ by assumption) as well as under $\partial_i$ for $i>m$ by the commutation relations. This produces the first step in the filtration, which we can then obtain inductively. 

Slightly more explicitly: For $i\in I^+$, put $N_i=D_i|_{E^-}$. For $s\geq1$, define
\[
 F_s=
 \bigcap_{(i_1,\dots,i_s)\in {I^+}^s}
 \ker(N_{i_1}\cdots N_{i_s}),
 \qquad F_0=0,
\]
which are $A^-$-submodules of $E^-$.

The operators $N_i$ commute and satisfy $N_i^d=0$ by
Lemma~\ref{kernelcontrol}(i). Hence
\[
 0=F_0\subset F_1\subset\cdots\subset F_e=E^-
\]
for $e=\lvert I^+\rvert(d-1)+1$, and
\[
 N_i(F_s)\subset F_{s-1}
 \qquad (i\in I^+).
\]

We claim that the remaining connection operators preserve every $F_s$.
Indeed, let $T$ be $D_j$ for $j\in I^-$, or $\partial_j$ for $j>m$.
Integrability gives $[T,D_i]=0$ for every $i\in I^+$. Since
\[
 E^-=\bigcap_{i\in I^+}\ker(D_i^d)
\]
by Lemma~\ref{kernelcontrol}(iii), it follows that $T(E^-)\subset E^-$. The
commutation relations then imply $T(F_s)\subset F_s$.

The morphism $A^-\to B$ is flat. Thus
$E'_s:=B\otimes_{A^-}F_s$
is a submodule of $B\otimes_{A^-}E^-\cong B\otimes_AE$.
Set
\[
 \cE_s:=\O_{U'}\otimes_BE'_s.
\]
The preceding observations show that the remaining connection operators
preserve $\cE_s$. Moreover, for $i\in I^+$ we have
$D_i(F_s)\subset F_s$, and $z_i$ is invertible on $U'$. Hence
$\partial_i=z_i^{-1}D_i$ also preserves $\cE_s$. Therefore $\cE_s$ is a
sub-connection of $\cE_{U'}$.

Put $G_s=F_s/F_{s-1}$.  Each $N_i$, $i\in I^+$, acts trivially on $G_s$.
Let $\N_s$ be the coherent $\O_{X'}$-module associated with
$B\otimes_{A^-}G_s$. The connection operators in the remaining directions
descend to this module, while for $i\in I^+$ we define
\[
 \nabla_{\partial_i}(b\otimes\overline v)
   =\partial_i(b)\otimes\overline v.
\]
These operators commute, so $\N_s$ carries an integrable connection on
$X'$. In particular, it is locally free. Since $D_i$ acts trivially on
$G_s$, its restriction to $U'$ agrees with the quotient connection, and
therefore
\[
 \N_s|_{U'}\cong\cE_s/\cE_{s-1}.
\]
After omitting repetitions from the filtration $(\cE_s)_s$, we obtain the
required finite filtration.
\end{proof}

\begin{cor}\label{extendablehol}
Let $j\colon U'\hookrightarrow X'$ be the natural open immersion.  Then
$j_+\cE_{U'}$ is a coadmissible, weakly holonomic, and holonomic $\Dcap_{X'}$-module.
\end{cor}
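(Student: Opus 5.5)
We would argue by induction along the filtration of Proposition~\ref{extensionofextendables}. Since $D^+$ is (the restriction to $X'$ of) an snc divisor and $X'$ is affinoid, the complement $U'\hookrightarrow X'$ is affine. Hence $j_*$ is exact on coherent $\O_{U'}$-modules and $j_+\cE=j_*\cE$ is concentrated in degree zero for every integrable connection $\cE$ on $U'$. Applying $j_+$ to the short exact sequences $0\to\cE_{s-1}\to\cE_s\to\cE_s/\cE_{s-1}\to 0$ therefore yields short exact sequences of $\Dcap_{X'}$-modules
\[
0\to j_+\cE_{s-1}\to j_+\cE_s\to j_+(\cE_s/\cE_{s-1})\to 0 .
\]
Coadmissible modules are closed under extensions, being the $\C$-complexes concentrated in degree zero. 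Holonomic complexes form a triangulated subcategory of $\mathrm D_{\C}(\Dcap_{X'})$. For weak holonomicity, the long exact $\mathcal{E}xt^\bullet(-,\Dcap_{X'})$-sequence shows that an extension of weakly holonomic modules is weakly holonomic. So it suffices to treat each graded piece $j_+(\cE_s/\cE_{s-1})\cong j_+j^!\N_s$, where $\N_s$ is an integrable connection on $X'$.

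For a graded piece, holonomicity follows from \cite[Theorem 5.10.(iv), Theorem 5.12]{Hol}. These give that $\N_s$ is holonomic, hence so is $j^!\N_s$, and hence so is $j_+j^!\N_s$, exactly as explained at the start of this section. For coadmissibility and weak holonomicity we would use Corollary~\ref{postypeext} in its trivial instance. Locally on $X'$ (passing to an étale chart as in Proposition~\ref{cor:etale-boundary-charts}, or working directly with coordinates $z_i$, $i\in I^+$), $\N_s$ is a connection on $X'$, i.e.\ a log-connection along $D^+$ with all residues zero. Zero is of positive type, since the series $\sum_{n\geq1}x^n/(-n)$ converges on the open unit disc. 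Corollary~\ref{postypeext} then gives that $j_*(\N_s|_{U'})$ is coadmissible and weakly holonomic. Alternatively, one can quote \cite[Proposition 7.2]{Dcapthree} directly, using $j_*j^*\N_s\cong\N_s(*D^+)\cong\Dcap_{X'}\otimes_{\D_{X'}}(\O(*D^+)\otimes\N_s)$.

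The main obstacle is this last step: Corollary~\ref{postypeext} is stated over the polydisc $A$ with a free module and the divisor $V(z_1\cdots z_m)$, whereas here $X'$ is an affinoid subdomain on which the $z_i$, $i\in I^-$, are invertible, and $\N_s$ is only locally free. We would handle this by localizing: coadmissibility and weak holonomicity are local, so we may cover $X'$ by affinoids on which $\N_s$ is free. There we would apply \cite[Theorem 1.2]{Bitoun}, which is formulated for étale-coordinate affinoids and $b$-functions. Its input is supplied by Lemma~\ref{thm:res-b-1var} applied in each $z_i$-direction, $i\in I^+$, with $\mu(T)=T$, together with the multiplication argument from the proof of Theorem~\ref{thm:res-b}. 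This shows that the $b$-functions of a basis of $\N_s$ with respect to $\prod_{i\in I^+}z_i$ have roots in $\mathbb Z_{\geq0}$, which are of positive type.
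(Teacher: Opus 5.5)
Your skeleton is the paper's: the filtration of Proposition~\ref{extensionofextendables}, vanishing of higher direct images so that $j_+=j_*$ turns the filtration into short exact sequences, holonomicity of the graded pieces $j_+j^!\N_s$ via \cite[Theorem~5.10.(iv), Theorem~5.12]{Hol}, and closure under extensions. One wording fix: $U'$ is not affinoid, so ``affine'' is not the right reason. What you need, and what the paper uses, is that $V\cap U'$ is quasi-Stein for every affinoid $V\subseteq X'$, together with Kiehl's Theorem~B.

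The real difference is how you obtain coadmissibility and weak holonomicity of the graded pieces. The paper needs nothing extra for coadmissibility: holonomic complexes are $\C$-complexes, and a $\C$-complex concentrated in degree zero is coadmissible. So your separate extension argument for coadmissibility is redundant. For weak holonomicity the paper simply appeals to \cite[Lemma~10.5]{Dcapthree}, from Ardakov--Bode--Wadsley's treatment of pushforwards of integrable connections to Zariski-open complements. That result covers $j_*(\N_s|_{U'})$ directly, since $\N_s$ is a genuine integrable connection on $X'$, and the paper then runs along the filtration as you do.

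Your detour through Corollary~\ref{postypeext} and \cite[Theorem~1.2]{Bitoun} with zero residues can be made to work. It even yields the extra description $j_*(\N_s|_{U'})\cong\Dcap_{X'}\otimes_{\D_{X'}}\N_s(*D^+)$. But it is not available off the shelf, and you do not address two points:
\begin{itemize}
\item Lemma~\ref{thm:res-b-1var} is stated for $A=B\langle z\rangle$, and the affinoids on which $\N_s$ is free are not of that form. You would have to rerun its proof for an \'etale coordinate $z_i$ on a general affinoid. It does go through: it uses only Noetherianity, the saturation step, $\partial_i(z_k)=\delta_{ik}$, and the vanishing of the residue of $z_i\partial_i$, which holds because $\N_s$ is a genuine connection.
\item The weak-holonomicity half of Corollary~\ref{postypeext} rests on \cite[Proposition~4.1.4]{MNM}, which is proved over Tate algebras. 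You would need to transport that argument to such affinoids as well.
\end{itemize}

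You should drop the ``alternative'' route. The chain $j_*j^*\N_s\cong\N_s(*D^+)\cong\Dcap_{X'}\otimes_{\D_{X'}}(\O(*D^+)\otimes\N_s)$ fails at both intermediate steps. In rigid geometry $j_*\O_{U'}$ is far larger than the algebraic localization $\O_{X'}(*D^+)$: on the unit disc, $\sum_{n\geq0}p^{n^2}z^{-n}$ is a section of $j_*\O_U$ that is not in $K\langle z\rangle[z^{-1}]$. Correspondingly, $\Dcap_{X'}\otimes_{\D_{X'}}\O_{X'}(*D^+)$ is not $\O_{X'}(*D^+)$. Only the composite isomorphism holds, and it is precisely the nontrivial conclusion of \cite[Theorem~1.2]{Bitoun}, so it cannot be used as an input to bypass that theorem.
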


\begin{proof}
Let
\[
 0=\cE_0\subset\cE_1\subset\cdots\subset\cE_k=\cE_{U'}
\]
and $\N_s$ be as in Proposition~\ref{extensionofextendables}. By
\cite[Theorem~5.12]{Hol}, each $\N_s$ is holonomic. Since
$\N_s|_{U'}=j^!\N_s$, it follows from
\cite[Theorem~5.10.(iv)]{Hol} that
\[
 j_+(\cE_s/\cE_{s-1})
 \cong j_+(\N_s|_{U'})
\]
is holonomic.

Applying $j_+$ to the short exact sequences defining the filtration gives
distinguished triangles
\[
 j_+\cE_{s-1}\to j_+\cE_s
 \to j_+(\N_s|_{U'})
 \to j_+\cE_{s-1}[1].
\]
Holonomic complexes form a triangulated subcategory by
\cite[Lemma~5.2]{Hol}; induction on $s$ therefore shows that
$j_+\cE_{U'}$ is a holonomic complex (and in particular, a $\C$-complex).

Finally, for every affinoid $V\subset X'$, the complement
\[
 V\cap U'=V\setminus V\Bigl(\prod_{i\in P}z_i\Bigr)
\]
is quasi-Stein. Kiehl's Theorem~B implies that higher direct images of
coherent $\O_{U'}$-modules vanish. Hence $j_+\cE_{U'}=Rj_*\cE_{U'}$ is
concentrated in degree zero and is therefore a holonomic
$\Dcap_{X'}$-module (and in particular, a coadmissible $\Dcap_{X'}$-module).

The weak holonomicity follows in the same way from \cite[Lemma 10.5]{Dcapthree}.
\end{proof}

\subsection{An application of the criterion}

Let $A=K\langle z_1,\dots,z_n\rangle$, let $m\leq n$, and let $E$ be a finite
free logarithmic connection along $V(z_1\cdots z_m)$, with nilpotent
residues. For $\sigma=(\sigma_1, \hdots, \sigma_m)\in\{\pm1\}^m$, set
\[
 I_\sigma^{+}=\{i:\sigma_i=1\},
 \qquad I_\sigma^{-}=\{i:\sigma_i=-1\}.
\]

\begin{prop}\label{locallyextendable}
There are radii $0<r_i<1$ with associated affinoids
\[
 X_\sigma=\Sp A_\sigma,
 \qquad
 A_\sigma=
 K\langle r_i^{-1}z_i,\ z_j,r_jz_j^{-1},\ z_{m+1},\dots,z_n:
 i\in I_\sigma^{+}, j\in I_\sigma^{-}\rangle,
\]
such that on each $X_\sigma$, the restriction of the connection
satisfies the hypotheses of Proposition~\ref{extensionofextendables} with
$I^+=I_\sigma^{+}$ and $I^-=I_\sigma^{-}$.
\end{prop}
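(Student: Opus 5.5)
We have to show that on each $X_\sigma$, after a change of basis over $A_\sigma$, the matrices of $D_i$ ($i\in I_\sigma^+$) have entries in $A^-_\sigma:=K\langle z_j, r_jz_j^{-1}, z_{m+1},\dots,z_n : j\in I^-_\sigma\rangle$ and are nilpotent. The plan is the classical relative trivialization of a log-connection along $z_i=0$: find a gauge transformation $Y$ such that, in the new basis, $D_i$ acts by its residue matrix.

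\emph{Step 1: the case of one boundary variable.} Fix $i\leq m$. Write the matrix of $D_i$ as $G_i=\sum_{\nu\geq0}z_i^\nu G_{i,\nu}$, where $G_{i,\nu}$ has entries in $K\langle z_j:j\neq i\rangle$. Its residue $N_i:=G_{i,0}$ is nilpotent, because the minimal polynomial of $\overline D_i$ is a power of $T$. We look for $Y=\sum_\nu z_i^\nu Y_\nu$ with $Y_0=1$ and $D_iY+G_iY=YN_i$. Comparing coefficients of $z_i^\nu$ gives
\[
\nu Y_\nu+N_iY_\nu-Y_\nu N_i=-\sum_{k=1}^{\nu}G_{i,k}Y_{\nu-k}.
\]
Since $N_i$ is nilpotent, the operator $\nu+\operatorname{ad}(N_i)$ is invertible. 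Its inverse is given by a finite Neumann series of length at most $2d-1$, with coefficients bounded by $|\nu|^{-2d+1}$ up to norms of $N_i$. This is where $p$-adic small divisors enter: $|\nu|^{-1}$ grows like $\nu$ at most, so $|Y_\nu|\leq C^\nu\,\nu^{c}$ for suitable $C,c$. Hence $Y$ converges, together with its inverse, on $|z_i|\leq r_i$ for $r_i$ small. This is the standard estimate in \cite[Proposition 18.1.1]{KedlayapDE} and \cite{MR2360314}. Crucially, the bound is uniform in the remaining variables, since we use sup norms on $K\langle z_j:j\neq i\rangle$. Since $\det Y_0=1$, we can also make $\det Y$ a unit after shrinking $r_i$.

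\emph{Step 2: several variables simultaneously.} The main obstacle is to trivialize all $D_i$, $i\in I^+_\sigma$, at once, and to make the residue matrices independent of the $I^+$-variables. I would proceed inductively, ordering $I^+_\sigma=\{i_1,\dots,i_t\}$. After gauging by $Y^{(1)}$ for $i_1$, the matrix of $D_{i_1}$ is $N_{i_1}$, which is independent of $z_{i_1}$. We then carry out the next step for $D_{i_2}$ by the same recursion. The key point is that integrability, $[D_{i_1},D_{i_2}]=0$, together with uniqueness of the solution $Y^{(2)}$ (a consequence of the invertibility of $\nu+\operatorname{ad}N$ for $\nu\geq1$), forces $Y^{(2)}$ to commute appropriately with $D_{i_1}$. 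So the gauge $Y^{(2)}$ preserves the property that $D_{i_1}$ has matrix $N_{i_1}$, with no $z_{i_2}$-dependence, and the coefficients remain independent of $z_{i_1}$ in the relevant sense, as in Lemma~\ref{kernelcontrol}(i).

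At the end, the matrices of $D_i$ for $i\in I^+_\sigma$ are the residues $N_i$. These commute and have entries not involving any $z_i$ with $i\in I^+$, so they lie in $A^-_\sigma$. They are nilpotent because the residues are. For $j\in I^-_\sigma$, nothing needs to be done: the condition $|z_j|\geq r_j$ only enlarges the coefficient ring to $A^-_\sigma$. Since we work with sup-norm estimates on $K\langle z\rangle$ rather than on these annuli, the same radii work for all $\sigma$.

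Finally, the $X_\sigma$ cover $\Sp A$: each coordinate satisfies either $|z_i|\leq r_i$ or $r_i\leq|z_i|\leq1$. So the radii $r_i$ chosen in Step 1, taken as the minimum over the finitely many inductive steps, give the claim.
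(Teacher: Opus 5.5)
Your overall strategy (successive one-variable gauge transformations, the small-divisor recursion, the covering by the $X_\sigma$) is essentially the paper's. However, Step~2, which you rightly single out as the main obstacle, contains a false assertion and skips the one point that needs proof.

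The gauge $Y^{(2)}$ does not preserve the matrix $N_{i_1}$ of $D_{i_1}$; it replaces it by $N_{i_1}|_{z_{i_2}=0}$. To see this, note that after the gauge, integrability reads $D_{i_2}(M_1)+[N'_{i_2},M_1]=D_{i_1}(N'_{i_2})$ for the new matrix $M_1$ of $D_{i_1}$. The right side is independent of $z_{i_2}$, so $(\nu+\operatorname{ad}N'_{i_2})M_{1,\nu}=0$ for $\nu\geq1$. Moreover $M_{1,0}=N_{i_1}|_{z_{i_2}=0}$ because $Y^{(2)}\equiv1$ modulo $z_{i_2}$. So the final matrices are not your Step~1 residues $N_i=G_{i,0}$, and those residues do in general depend on the other $I^+$-variables. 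For example, gauge the constant connection $D_1\mapsto N=\begin{pmatrix}0&1\\0&0\end{pmatrix}$, $D_2\mapsto 0$ on $K\langle z_1,z_2\rangle^2$ by $\mathrm{diag}(1,1+pz_2)$. This gives $D_1\mapsto N_1=\begin{pmatrix}0&1+pz_2\\0&0\end{pmatrix}$ and $D_2\mapsto\mathrm{diag}(0,pz_2/(1+pz_2))$, with nilpotent residues. Here $Y^{(1)}=1$, $Y^{(2)}=\mathrm{diag}(1,(1+pz_2)^{-1})$, and the final matrix of $D_1$ is $N\neq N_1$.

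More importantly, you never show that the matrix of $D_{i_2}$ after its gauge is independent of $z_{i_1}$. That matrix is the residue $N'_{i_2}=G'_{i_2}|_{z_{i_2}=0}$ of the \emph{already gauged} connection. In general you need the residue of $D_{i_s}$ after $s-1$ gauges to be independent of $z_{i_1},\dots,z_{i_{s-1}}$. This is exactly what is needed to land in $\mathrm{Mat}_d(R)$. It does not follow from the construction or from uniqueness of $Y^{(2)}$, and the appeal to Lemma~\ref{kernelcontrol}(i) does not supply it.

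The gap can be closed with one more use of integrability and nilpotence. In the final basis let $M_s$ be the matrix of $D_{i_s}$. By the gauge construction and the computation above, $M_s$ is nilpotent and independent of $z_{i_s}$ and of every $z_{i_r}$ with $r>s$. For $s<r$, the relation $D_{i_s}(M_r)-D_{i_r}(M_s)+[M_s,M_r]=0$ then becomes $D_{i_s}(M_r)+[M_s,M_r]=0$. Expanding in $z_{i_s}$ gives $(\nu+\operatorname{ad}M_s)M_{r,\nu}=0$ for $\nu\geq1$, so $M_r$ does not depend on $z_{i_s}$ either.

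The paper arranges the induction in the opposite order, which isolates this point. It first solves the problem for $E/z_kE$ and lifts that basis constantly in $z_k$. It then uses Lemma~\ref{kernelcontrol}(iv) to get $N_0\in\mathrm{Mat}_d(R)$ before gauging in $z_k$: the residue of $D_k$ commutes with the other $\overline{D_i}$, hence preserves the $R$-span of the basis. Integrability then kills the positive $z_k$-coefficients of the other matrices.

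A smaller point: your growth estimate is too quick. Iterating the recursion yields the factor $\prod_{k\leq\nu}|k|^{-q}=|\nu!|^{-q}$. Bounding each $|k|^{-1}$ by $k$ only gives $(\nu!)^q$, which is useless. The exponential bound comes from $|\nu!|^{-1}\leq|p|^{-\nu/(p-1)}$, as in the paper.
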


\begin{proof}
Fix $\sigma\in \{\pm 1\}^m$, write $I^+=I_\sigma^{+}$ and $I^-=I_\sigma^{-}$, and put
\[
 R=K\langle z_j\colon j\in I^-, \ z_{m+1},\dots,z_n\rangle.
\]
We prove by induction on $\lvert I^+\rvert$ that, after restricting the variables
$z_i$, $i\in I^+$, to sufficiently small closed discs, there is a basis in
which the matrix of each $D_i=z_i\partial_i$, $i\in I^+$, belongs to
$\mathrm{Mat}_d(R)$ and is nilpotent. The assertion is clear if $I^+$ is
empty.

Choose $k\in I^+$ and consider 
\[
 \overline E=E/z_kE.
\]
The operators $D_i$, $i\in I^+\setminus\{k\}$, descend to commuting
logarithmic derivations on $\overline E$, and their residues remain
nilpotent.  By induction, after shrinking the discs in the variables
$z_i$, $i\in I^+\setminus\{k\}$, there is a basis of $\overline E$ in which
the matrices of these operators belong to $\mathrm{Mat}_d(R)$ and are
nilpotent. Write
\[
 C=R\langle r_i^{-1}z_i:i\in I^+\setminus\{k\}\rangle
\]
for the resulting coefficient ring. Lifting the corresponding
change-of-basis matrix independently of $z_k$, we obtain a basis of $E$
over $C\langle z_k\rangle$ whose reduction modulo $z_k$ is the chosen
basis of $\overline E$.

Let
\[
 N(z_k)=\sum_{\nu\geq0}N_\nu z_k^\nu
 \in\mathrm{Mat}_d(C\langle z_k\rangle)
\]
be the matrix of $D_k$ in this basis, and set $N_0=N(0)$.  Modulo $z_k$,
the operator $D_k$ is $C$-linear and commutes with every $D_i$,
$i\in I^+\setminus\{k\}$. Lemma~\ref{kernelcontrol}(iv), applied to
$\overline E$, therefore shows that $D_k$ preserves the $R$-span of the
chosen basis. Equivalently,
\[
 N_0\in\mathrm{Mat}_d(R).
\]
Moreover, $N_0$ represents the residue of the connection along $z_k=0$,
so it is nilpotent.

We now construct a change of basis making the matrix in the
$k$-direction equal to $N_0$. For this, seek
\[
 T=1+\sum_{\nu>0}T_\nu z_k^\nu\in \mathrm{Mat}_d(R[[z_k]])
\]
satisfying
\begin{equation}\label{eq:gauge}
 N(z_k)T+z_k\partial_k(T)=TN_0.
\end{equation}
Put
\begin{align*}
\Phi: &\mathrm{Mat}_d(R)\to \mathrm{Mat}_d(R)\\
 &C\mapsto N_0C-CN_0.
\end{align*}
Comparing the coefficient of $z_k^\nu$ in~\eqref{eq:gauge} gives
\begin{equation}\label{eq:gauge-recursion}
 (\nu+\Phi)T_\nu
 =
 -\sum_{\mu=1}^{\nu}N_\mu T_{\nu-\mu}.
\end{equation}
Choose $h$ such that $N_0^h=0$ and put $q=2h-1$. Then
$\Phi^q=0$, since every term in the expansion of
$\Phi^q(C)$ contains either $N_0^h$ on the left or on the right. Hence,
for every $\nu>0$,
\[
 (\nu+\Phi)^{-1}
 =
 \sum_{a=0}^{q-1}(-1)^a\nu^{-a-1}\Phi^a.
\]
Thus~\eqref{eq:gauge-recursion} determines the matrices $T_\nu$
uniquely.

It remains to verify convergence. Choose a Banach norm on $C$ and the
associated matrix norm. Since $N(z_k)$ is analytic, the coefficients
$N_\mu$ are bounded. The preceding formula therefore gives a constant
$c_0>0$, independent of $\nu$, such that
\[
 \|T_\nu\|
 \leq
 c_0|\nu|^{-q}
 \max_{0\leq j<\nu}\|T_j\|.
\]
If
\[
 u_\nu=\max_{0\leq j\leq\nu}\|T_j\|,
\]
then, after enlarging $c_0$ if necessary,
\[
 u_\nu\leq c_0|\nu|^{-q}u_{\nu-1},
 \qquad
 u_\nu\leq c_0^\nu|\nu!|^{-q}.
\]
Since $\lvert\nu!\rvert^{-1/\nu}$ is bounded, the coefficients $T_\nu$
have at most exponential growth. Consequently, $T$ converges after
restricting $z_k$ to a disc of some positive radius. By shrinking this
radius once more, we may arrange that $\|T-1\|<1$; then $T$ is invertible.

In the resulting basis the matrix of $D_k$ is $N_0$.  Let
\[
 A_i(z_k)=\sum_{\nu\geq0}A_{i,\nu}z_k^\nu
\]
be the matrix of $D_i$ for $i\in P\setminus\{k\}$. Integrability gives
\[
 z_k\partial_k(A_i)-z_i\partial_i(N_0)+[N_0,A_i]=0.
\]
Since $N_0$ has entries in $R$, we have $z_i\partial_i(N_0)=0$. Hence
\[
 (\nu+\Phi)A_{i,\nu}=0
 \qquad(\nu>0).
\]
The operator $\nu+\Phi$ is invertible, so $A_{i,\nu}=0$ for every
$\nu>0$.  Moreover, $T\equiv1\pmod{z_k}$, and therefore
$A_{i,0}$ is the matrix furnished by the inductive hypothesis. Thus
\[
 A_i(z_k)=A_{i,0}\in\mathrm{Mat}_d(R),
\]
and this matrix is nilpotent. This completes the induction.

For each $\sigma$, the preceding argument produces positive radii for the
variables indexed by $I_\sigma^+$. Given $i\leq m$, we can then find a common
radius $r_i$ which works for all $\sigma$ with $i\in I_\sigma^+$.
With
\[
 B_\sigma
 =
 K\langle r_i^{-1}z_i:i\in I_\sigma^+\rangle
 \widehat\otimes_K
 K\langle z_j\colon j\in I_\sigma^-, \ z_{m+1},\dots,z_n\rangle,
\]
we have
\[
 A_\sigma
 =
 B_\sigma\langle r_jz_j^{-1}:j\in I_\sigma^-\rangle.
\]
The basis constructed over $B_\sigma$ remains a basis after this
localization, and the matrices of $D_i$, $i\in P_\sigma$, remain nilpotent
with entries in
\[
 B_\sigma^-
 =
 K\langle z_j \colon j\in I_\sigma^-, \ z_{m+1},\dots,z_n\rangle.
\]
Thus the hypotheses of Proposition~\ref{extensionofextendables} hold on
$X_\sigma$.

Finally, for each $i\leq m$, the two rational subdomains
\[
 \{|z_i|\leq r_i\}
 \qquad\text{and}\qquad
 \{|z_i|\geq r_i\}
\]
cover the unit disc in the $z_i$-coordinate. Taking their products over
$i=1,\dots,m$ gives precisely the affinoids $X_\sigma$, so these form a
finite affinoid covering of $\Sp A$.
\end{proof}

Now suppose that $X=\Sp B$ is a smooth affinoid $K$-variety with finite \'etale chart 
\begin{equation*}
  g: X\to \mathbb{B}^n=\Sp A=\Sp K\langle z_1, \hdots, z_n\rangle.   
\end{equation*}
Let $H=V(z_1\hdots z_m)$ and $V=\mathbb{B}^n\setminus H$, and $D=g^{-1}V(z_1\hdots z_m)$, $U=g^{-1}V=X\setminus D$, with the induced maps
	\begin{equation*}
		\begin{xy}
			\xymatrix{U\ar[r]^{j}\ar[d]_{g_U}& X\ar[d]^g\\
				V\ar[r]_{\ell}& \mathbb{B}^n}
		\end{xy} 
	\end{equation*}
Write $x_i$ for the image of $z_i$ in $B$.
	
A log-connection on $X$ relative to $D$ is a (locally) free $\O_X$-module with integrable connection $\nabla: \E\to \E\otimes_{\O_X} \Omega^1(\mathrm{log}D)$. Consequently, $\E$ is equipped with pairwise commuting operators $x_i\partial_i$ for $i\leq m$ and $\partial_i$ for $m<i\leq n$, satisfying the Leibniz rule, where in a slight abuse of notation $\partial_i$ is the element of the tangent sheaf corresponding to $1\otimes \partial_i=1\otimes\frac{\partial}{\partial z_i}$ under the identification $\mathrm{Der}_K(B)\cong B\otimes_A \mathrm{Der}_K(A)$.
	
It follows that $g_*\E$ is a vector bundle on $\mathbb{B}^n$, and hence free by ~\cite[\S1, Satz~1]{LutkebohmertVectorBundles}, with the log-connection on $X$ defining the structure of a log-connection on $g_*\E$ over the polydisk (or, in our earlier language, on the $A$-module $E:=g_*\E(\mathbb{B}^n)=E(X)$). As $E/z_iE\cong E/x_iE$ by definition, we see directly that the residue map $\overline{x_i\partial_i}: E/x_iE\to E/x_iE$ agrees with the residue map for the log-connection $g_*\E$.
	
We say that $\E$ is a log-connection with nilpotent residues if each of these residue maps is a nilpotent map -- by the paragraph above, this is equivalent to requiring $g_*\E$ to be a log-connection with nilpotent residues over the polydisk.
	
More generally, if $X$ is any smooth rigid analytic $K$-variety and $D\subset X$ is an algebraic snc divisor, we have seen in Corollary~\ref{cor:finite-etale-boundary-charts} that it is locally of the form discussed above. It is thus straightforward to define log-connections with nilpotent residues on this level of generality by arguing locally, noting that the above is independent of the choice of coordinate chart.

\begin{cor}\label{cor:nilpotent-log-holonomic}
Let $X$ be smooth, let $D\subset X$ be an algebraic snc divisor, and let $\cE$ be a
logarithmic connection on $(X,D)$ with nilpotent residues.  If
$j\colon U=X\setminus D\hookrightarrow X$, then $j_+(\cE|_U)=j_*(\cE|_U)$ is coadmissible, weakly holonomic, and holonomic.
\end{cor}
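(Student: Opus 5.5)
The plan is to reduce to the local model over a polydisk and then descend along the finite étale chart. All the properties in question are local on $X$: coadmissibility, weak holonomicity (vanishing of $\mathcal{E}xt$ sheaves) and holonomicity. The identity $j_+=Rj_*=j_*$ is local too, since $j$ is an affinoid-complement immersion. So by Corollary~\ref{cor:finite-etale-boundary-charts} we may assume $X=\Sp B$ is affinoid with a finite étale chart $g\colon X\to\mathbb B^n=\Sp A$ and $D=g^{-1}(V(z_1\cdots z_m))$, in the notation of the diagram preceding the statement.

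\emph{Step 1.} Push forward along $g$. As explained before the statement, $E=g_*\cE(\mathbb B^n)$ is a finite free $A$-module, by L\"utkebohmert. It is a log-connection along $H=V(z_1\cdots z_m)$ with nilpotent residues. By base change in the cartesian square, $g_+j_+(\cE|_U)\cong \ell_+ g_{U,+}(\cE|_U)$, and $g_{U,+}(\cE|_U)=g_{U,*}(\cE|_U)$ is the integrable connection on $V$ attached to $E$. Both $g$ and $g_U$ are finite étale, so their pushforwards are exact and agree with $g_*$; this should follow from \cite[Theorem 1.3]{6Op} or a direct computation.

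\emph{Step 2.} Treat the polydisk. By Proposition~\ref{locallyextendable}, $\mathbb B^n$ is covered by finitely many affinoids $X_\sigma$. On each $X_\sigma$ the hypotheses of Proposition~\ref{extensionofextendables} hold: the $z_j$ with $j\in I^-_\sigma$ are invertible on $X_\sigma$, so we may take $X'=X_\sigma$. Corollary~\ref{extendablehol} then shows that $\ell_+(E|_V)$ restricted to $X_\sigma$ is concentrated in degree zero and is coadmissible, weakly holonomic and holonomic. These properties are local and the $X_\sigma$ form an admissible finite affinoid covering, so $\ell_+(E|_V)=\ell_*(E|_V)$ has all three properties on $\mathbb B^n$.

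\emph{Step 3.} Descend along $g$. We know $g_+j_+(\cE|_U)$ is concentrated in degree zero. Since $g$ is finite, hence exact on sheaves, $j_+(\cE|_U)$ is also concentrated in degree zero, so $j_+(\cE|_U)=j_*(\cE|_U)$; one could equally see this directly from quasi-Steinness of $U\cap V'$ as in Corollary~\ref{extendablehol}. Lemma~\ref{fettest} now gives coadmissibility and holonomicity of $j_*(\cE|_U)$, and Lemma~\ref{fetwhol} gives weak holonomicity.

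The main obstacle is Step 1, namely the compatibility $g_+j_+\cong\ell_+g_{U,+}$ together with the identification of $g_{U,+}(\cE|_U)$ with the connection coming from $E$, including its $\Dcap$-module structure. I would first try to check this directly on sections over affinoids $W\subseteq\mathbb B^n$. There $g^{-1}(W)\cap U$ is the preimage of $W\cap V$, and $(j_*\cE|_U)(g^{-1}W)=\cE(g^{-1}(W\cap V))=E(W\cap V)$. This is compatible with the $\D$-actions through $\mathrm{Der}_K(B)\cong B\otimes_A\mathrm{Der}_K(A)$, and the $\Dcap$-structures then agree by continuity. If this direct check becomes cumbersome, I would instead fall back on the general base change theorem for $\C$-complexes in \cite{6Op}.
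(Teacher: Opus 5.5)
Your proposal is correct and follows the paper's proof essentially step for step. You reduce to a finite étale boundary chart, identify $g_+j_+(\cE|_U)$ with $\ell_+$ of the connection attached to $E$ on $V$, treat the polydisk via Proposition~\ref{locallyextendable} and Corollary~\ref{extendablehol}, and descend with Lemmas~\ref{fettest} and~\ref{fetwhol}.

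One correction: the isomorphism in your Step 1 is not a base change statement. It is just transitivity of direct images applied to $g\circ j=\ell\circ g_U$, together with $g_{U,+}=g_{U,*}$ for the finite étale map $g_U$. This is exactly how the paper obtains it, so the step you flag as the main obstacle needs no separate check.
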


\begin{proof}
First, note that $j_+(\E|_U)=Rj_*(\E|_U)$ is indeed concentrated in degree zero: if $W$ is any affinoid subdomain, then $W\cap U$ is quasi-Stein, so that $R^qj_{W,*}(\E|_{W\cap U})=0$ for $q>0$ and $j_W: W\cap U\to W$ by Kiehl's Theorem B. Thus $j_+(\E|_U)=j_*(\E|_U)$, and it remains to establish the three finiteness conditions of coadmissibility, weak holonomicity, and holonomicity.

The assertion is local on $X$, so that we can assume that $X$ admits a finite étale boundary chart
\[
 g\colon X\to
 \mathbb{B}^n=\Sp K\langle z_1,\dots,z_n\rangle
\]
as in Corollary~\ref{cor:finite-etale-boundary-charts}, so that
$D\cap X=g^{-1}(H)$, with the same notation as above.

Set $\mathcal{F}=g_*\E$. By the discussion above, $\mathcal{F}$ is a log-connection on the polydisk relative to $H$, with nilpotent residues.

Hence, Proposition~\ref{locallyextendable} and
Corollary~\ref{extendablehol} imply that $\ell_+(\mathcal{F}|_V)$ is coadmissible, weakly holonomic, and holonomic. Since $g\circ j=\ell\circ g_U$ and $g_{U, +}(\E|_U)\cong \mathcal{F}|_V$, we obtain
\begin{equation*}
    g_+j_+(\E|_U)\cong \ell_+(\mathcal{F}|_V).
\end{equation*}

Lemmas~\ref{fettest} and~\ref{fetwhol} therefore imply that
$j_{+}(\cE|_{U})$ is also coadmissible, weakly holonomic, and holonomic.

\end{proof}

\section{Proof of the main results}
\label{sec:main-proofs}

Now we work again in a more general setup.
Let $X$ be a smooth rigid analytic $K$-variety, $Z\subseteq X$ a Zariski closed subvariety
and $j\colon U:=X\setminus Z\to X$ the embedding.

In the following, we recall Liu--Zhu's (arithmetic) Riemann-Hilbert functor~\cite[\S3.2]{LiuZhuRH2017}
\footnote{Liu--Zhu denote it by $D_{\dR}^{0}$.}.
We consider an integral variant so that we can apply~\cite{MR4592580} later on.

The pro-étale site $X_{\proet}$ of $X$ consists of formal limits
$\text{``}\varprojlim\text{"}_{i\in I}U_{i}$ of $U_{i}\in X_{\et}$
such that $I$ is a cofiltered category and that the
transition maps $U_{j}\to U_{i}$ are finite étale and surjective.
A collection $\{f_{i}\colon U_{i}\to U\}_{i}$ in $X_{\proet}$ is
a covering if it is a pointwise covering, and a set-theoretic condition
is satisfied which we ignore for simplicity.
There is a canonical projection of sites $\nu\colon X_{\proet}\to X$.

Let $\widehat{\ZZ}_{p}:=\varprojlim \ZZ/p^{n}$ as sheaves on $X_{\proet}$.
For every $\ZZ_{p}$-étale local system $\bL$ on $X$, we obtaine the sheaf
$\widehat{\bL}:=\varprojlim\nu^{-1}\bL/p^{n}$ of $\widehat{\ZZ_{p}}$-modules on $X_{\proet}$.

Furthermore, we have the de Rham period structure sheaf $\OB_{\dR}$ on $X_{\proet}$.
For the precise definition, we refer the reader to~\cite{Sch13pAdicHodgeErratum}
and simply recall that it carries additional structure:
\begin{itemize}
  \item There is a canonical morphism $\alpha\colon\nu^{-1}\mathcal{O}\to\OB_{\dR}$ of sheaves of rings.
  \item $\OB_{\dR}$ carries a differential which commutes with the map $\alpha$.
  \item There is a decreasing filtration on $\OB_{\dR}$ called the \emph{Hodge filtration}.
\end{itemize}

We can now define the Riemann-Hilbert functor. If $\bL$ is an étale $\ZZ_{p}$-local system
on $X$, then we associate to it the filtered $\mathcal{O}$-module
\begin{equation*}
D_{\dR}\left(\bL\right):=\nu_{*}\left(\widehat{\bL}\otimes_{\widehat{\ZZ}_{p}}\OB_{\dR}\right)
\end{equation*}
with integrable connection satisfying Griffiths transversality.

We say that an étale $\mathbb{Z}_{p}$-local system $\bL$ is \emph{de Rham} if
$\widehat{\bL}$ is de Rham in the sense
of~\cite[Definition 8.3]{Sch13pAdicHodge}.

We now explain how the theory of the previous sections can be applied to connections coming from a de Rham local system.

\begin{lem}\label{lem:DdR-comp-directimages}
    Fix a smooth proper map $f\colon Y\to U$ of
    smooth rigid analytic $K$-varieties, and
    any de Rham $\mathbb{Z}_p$-local system $\bL$
    on $Y_{\et}$.
    Then the higher pushforwards
    $R^{q}f_{\et,*}\bL$ are de Rham
    $\mathbb Z_p$-local systems on $U_{\et}$,
    and
    the canonical morphisms
    \[
    D_{\dR}\left(R^{q}f_{\et,*}\bL\right)
    \stackrel{\cong}{\longrightarrow}
    R^{q}f_{+}D_{\dR}\left(\bL\right)
    \]
    are isomorphisms of filtered vector bundles with integrable connection
    for every $q\geq0$.
\end{lem}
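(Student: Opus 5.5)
This is essentially Scholze's relative de Rham comparison, \cite[Theorem 8.8]{Sch13pAdicHodge}. The plan is to translate its statement into the integral language of $D_{\dR}$ used here, rather than to reprove it.

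\emph{Step 1: local systems.} Since $f$ is smooth and proper, the higher direct images $R^qf_{\et,*}(\bL/p^n)$ are locally constant with finite fibres (\cite[Theorem 10.5.1]{Sch13pAdicHodge}, or Huber's finiteness for proper smooth morphisms). One then checks Mittag-Leffler, which is automatic from the finiteness, to conclude that $R^qf_{\et,*}\bL=\varprojlim_n R^qf_{\et,*}(\bL/p^n)$ is a $\mathbb Z_p$-local system. The completed pro-étale sheaf $\widehat{R^qf_{\et,*}\bL}$ is identified with $R^qf_{\proet,*}\widehat{\bL}$ via the comparison of étale and pro-étale cohomology for completed sheaves \cite[Cor.~3.17]{Sch13pAdicHodge}.

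\emph{Step 2: the comparison.} Scholze's Theorem 8.8 states that for a de Rham lisse $\widehat{\ZZ}_p$-sheaf $\widehat{\bL}$ on $Y$, the sheaf $R^qf_{\proet,*}\widehat{\bL}$ is de Rham. It also gives a filtered isomorphism
\[
R^qf_{\proet,*}\widehat{\bL}\otimes_{\widehat{\ZZ}_p}\OB_{\dR,U}\cong R^qf_{\dR,*}\bigl(\mathcal E\otimes\Omega^\bullet_{Y/U}\bigr)\otimes_{\O_U}\OB_{\dR,U},
\]
where $\mathcal E=D_{\dR}(\bL)$ is the filtered flat bundle attached to $\widehat{\bL}$, and the isomorphism is compatible with connections. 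We apply $\nu_*$ and use that $\nu_*\OB_{\dR}=\O_U$, with the corresponding filtered statement for the graded pieces. The left side becomes $D_{\dR}(R^qf_{\et,*}\bL)$. The right side becomes $R^qf_{\dR,*}(\mathcal E\otimes\Omega^\bullet_{Y/U})$ with its Gauss--Manin connection and Hodge filtration, and this is $R^qf_+D_{\dR}(\bL)$ up to the usual shift convention for $f_+$. Both sides are vector bundles. One must verify that the isomorphism obtained this way is the canonical morphism in the statement, meaning it is induced by adjunction from $\widehat{\bL}\otimes\OB_{\dR}\to \OB_{\dR}\otimes\Omega^\bullet$ via the Poincaré lemma. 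Checking this and the compatibility with the connection is formal.

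\emph{Main obstacle.} The delicate point is the integral-versus-rational bookkeeping. Scholze works with $\widehat{\ZZ}_p$-sheaves but his statement is really about $\widehat{\bL}\otimes\widehat{\bQ}_p$. Torsion in $R^qf_{\et,*}\bL$ is killed after tensoring with $\OB_{\dR}$, a $\bQ_p$-algebra, so it does not affect $D_{\dR}$. We would also check that the de Rham property (\cite[Def.~8.3]{Sch13pAdicHodge}) depends only on the rationalization, and that the torsion-free quotient of the direct image is still a local system. A second subtlety is the identification of the Gauss--Manin connection on the right-hand side with the $\D$-module direct image $R^qf_+$, which needs the Katz--Oda description to match the pushforward formalism of \cite{6Op}. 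Since the isomorphism is already one of vector bundles with connection, I expect this to be a standard comparison that can be cited.
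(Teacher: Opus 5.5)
Your route is the same as the paper's. You show that $R^qf_{\et,*}\bL$ is a $\ZZ_p$-local system with $\widehat{R^qf_{\et,*}\bL}\cong R^qf_{\proet,*}\widehat{\bL}$, so that the lisseness hypothesis of \cite[Theorem 8.8]{Sch13pAdicHodge} holds, and then read the claim off from part (ii) of that theorem. The gap is in Step~1, which is the only input beyond Scholze's theorem, and the sources you give for it do not cover it.

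Huber's local constancy of $R^qf_*$ for proper smooth morphisms requires the torsion order to be invertible on the residue field, i.e.\ prime to $p$. So it says nothing about $\bL/p^n$. Scholze's paper does not prove the relative $p$-adic statement either: Theorem 8.8 there \emph{assumes} that $R^if_{\proet,*}\widehat{\bL}$ is lisse. As written, you therefore apply Scholze's theorem without having verified its hypothesis. The paper closes this with Diao--Lan--Liu--Zhu. \cite[Corollary 6.3.5]{MR4592580} gives that $R^qf_{\et,*}\bL$ is a $\ZZ_p$-local system, and \cite[Proposition 5.2.1 and Lemma 6.3.3]{MR4592580} give the identification with $R^qf_{\proet,*}\widehat{\bL}$.

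Two further points in Step~1:
\begin{itemize}
\item Mittag--Leffler only makes $\varprojlim_n$ exact. To get a lisse $\ZZ_p$-sheaf you must also know that $(R^qf_{\et,*}(\bL/p^n))_n$ is AR-isomorphic to an adic system. This can come, e.g., from $Rf_{\et,*}(\bL/p^n)\simeq Rf_{\et,*}(\bL/p^{n+1})\otimes^{\mathbb L}_{\ZZ/p^{n+1}}\ZZ/p^n$ together with local constancy of all the cohomology sheaves.
\item \cite[Corollary 3.17]{Sch13pAdicHodge} compares cohomology groups. Identifying the sheaves $R^qf_{\proet,*}\widehat{\bL}$ and $\widehat{R^qf_{\et,*}\bL}$ on $U_{\proet}$ also needs that comparison over pro-\'etale objects of $U$, which is what the paper takes from the DLLZ citations above.
\end{itemize}

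By contrast, what you call the main obstacle is routine: torsion being killed by $\OB_{\dR}$, and matching relative de Rham cohomology with its Gauss--Manin connection to $R^qf_+$. The paper treats these as contained in the statement of \cite[Theorem 8.8(ii)]{Sch13pAdicHodge}.
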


\begin{proof}
    \cite[Corollary 6.3.5]{MR4592580} shows that
    $R^{q}f_{\et,*}\bL$ is a $\mathbb{Z}_p$-local system, and
    \cite[Proposition 5.2.1 and Lemma 6.3.3]{MR4592580} give the
    canonical identification
    \[
    R^{q}f_{\proet,*}\widehat{\bL}
    \cong
    \widehat{R^{q}f_{\et,*}\bL}.
    \]
    Hence $R^{q}f_{\proet,*}\widehat{\bL}$
    is a lisse $\widehat{\mathbb{Z}}_{p}$-local system
    on $U_{\proet}$.
    Thus \cite[Theorem 8.8(ii)]{Sch13pAdicHodge}
    applies, which in our notation says precisely
    $R^qf_+D_{\dR}(\bL)\cong D_{\dR}(R^qf_{\et,*}\bL)$.
\end{proof}

We first treat an algebraic snc boundary.
We say that an étale $\mathbb Z_p$-local system
has \emph{(quasi-)unipotent geometric monodromy along a divisor},
if its rationalisation has (quasi-)unipotent geometric monodromy
in the sense of~\cite[Definition 6.3.7]{MR4592580}.
The logarithmic Riemann-Hilbert correspondence
sends those étale $\mathbb Z_p$-local system which are de Rham and have unipotent geometric monodromy to integrable log-connections with nilpotent residues~\cite[Theorem~3.2.12(2)]{MR4536903}.

\begin{prop}\label{prop:snc-extension}
Let $j\colon U\hookrightarrow X$ be the complement of an algebraic snc divisor $Z\subset X$, let $\mathbb L$ be an étale $\mathbb Z_p$-local system
on $U$, and write $\E:=D_{\dR}\left(\mathbb L\right)$ for the associated
filtered vector bundle with integrable connection on $U$.  Then $j_+\E=j_*\E$ is a coadmissible, weakly holonomic
$\Dcap_X$-module. Furthermore,
$j_{+}\E$ is holonomic if $\mathbb L$
is de Rham.
\end{prop}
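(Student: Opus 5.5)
The plan is to work locally on $X$ and reduce everything to the logarithmic results of Sections~\ref{sec:log-res-b} and~\ref{sec:nilpotent-residues}. Since $Z$ is an algebraic snc divisor, every affinoid $W\subseteq X$ has $W\cap U$ quasi-Stein. Kiehl's Theorem~B then gives $R^qj_*\E=0$ for $q>0$, so $j_+\E=j_*\E$, exactly as in the proof of Corollary~\ref{cor:nilpotent-log-holonomic}. All finiteness properties are local, so by Corollary~\ref{cor:finite-etale-boundary-charts} we may assume there is a finite étale chart $g\colon X\to\mathbb B^n$ with $Z=g^{-1}V(z_1\cdots z_m)$.

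\textbf{Coadmissibility and weak holonomicity.} I would apply the logarithmic Riemann--Hilbert correspondence of Diao--Lan--Liu--Zhu \cite{MR4536903} to $\mathbb L$. It yields a log-connection $\E_{\log}$ on $(X,Z)$ extending $\E$, whose residues have eigenvalues in $\mathbb Q\cap[0,1)$. Following the set-up before Corollary~\ref{cor:nilpotent-log-holonomic}, $g_*\E_{\log}$ is a free log-connection on the polydisk, and its residues coincide with those of $\E_{\log}$. Rational numbers are of positive type, and $g_+j_+\E\cong\ell_+(g_*\E_{\log}|_V)$. Corollary~\ref{postypeext}, which rests on Theorem~\ref{thm:res-b}, therefore shows that $g_+j_+\E$ is coadmissible and weakly holonomic. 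Lemmas~\ref{fettest} and~\ref{fetwhol} then descend both properties to $j_*\E$.

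The main obstacle is confirming that the cited construction gives a log-connection with rational residue eigenvalues for an \emph{arbitrary} $\mathbb Z_p$-local system, not only for de Rham ones. I expect this to work because $D_{\dR}$ of a non-de Rham system is still a vector bundle with connection, of possibly smaller rank, and \cite{MR4536903} treats this general case. If this turns out to be too strong, I would restrict this part to the de Rham case as well.

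\textbf{Holonomicity when $\mathbb L$ is de Rham.} Geometric monodromy is quasi-unipotent by \cite{MR4592580}. Working on the chart, let $N$ be a common denominator of the residue eigenvalues. Consider the Kummer cover $h\colon X'\to X$ obtained by adjoining $N$-th roots $x_i^{1/N}$ for $i\leq m$. The space $X'$ is again smooth with snc boundary $Z'=h^{-1}Z$, and it has a finite étale chart to $\mathbb B^n$. Moreover $h|_{U'}$ is finite étale over $U$, and $h^{*}\mathbb L$ is de Rham with unipotent geometric monodromy, possibly after enlarging $K$ by roots of unity, which is harmless after descending along a finite extension. By \cite[Theorem~3.2.12(2)]{MR4536903}, $D_{\dR}(h^*\mathbb L)=h^*\E$ extends to a log-connection on $(X',Z')$ with nilpotent residues. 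Corollary~\ref{cor:nilpotent-log-holonomic} then shows that $j'_*(h^*\E)$ is holonomic.

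Pushing forward along the finite map $h$, which is projective, gives $h_+j'_+h^*\E\cong j_+(h_U)_*h_U^*\E$, and this is holonomic by \cite[Theorem~5.10.(iii)]{Hol}. The normalized trace $\frac1{\deg h}\mathrm{tr}$ splits $\E\to(h_U)_*h_U^*\E$ as a map of connections. Consequently $j_*\E$ is a direct summand of $j_*(h_U)_*h_U^*\E$, and Lemma~\ref{directsummand} gives holonomicity. The delicate point is ensuring that $h_+$ lands in degree zero, so that Lemma~\ref{directsummand} applies to modules; this holds because $h$ is finite.
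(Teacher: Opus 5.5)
Your proposal is correct and is essentially the paper's proof. For coadmissibility and weak holonomicity you use log Riemann--Hilbert with rational residue eigenvalues plus Corollary~\ref{postypeext}. For holonomicity you use a Kummer cover, nilpotent residues via \cite[Theorem~3.2.12(2)]{MR4536903}, Corollary~\ref{cor:nilpotent-log-holonomic}, the projective pushforward, and the trace splitting with Lemma~\ref{directsummand}.

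The differences are minor. You apply the log Riemann--Hilbert correspondence on $X$ and push the log-connection forward along the finite \'etale chart, which sidesteps the de Rham hypothesis of Lemma~\ref{lem:DdR-comp-directimages} that the paper invokes for this reduction. You should take $N$ directly from quasi-unipotence of the geometric monodromy, as the paper does, rather than as a common denominator of residue eigenvalues, which would need a residue--monodromy compatibility you do not supply; no enlargement of $K$ is needed, since the Kummer cover is defined over $K$.
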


\begin{proof}
The assertion is local on $X$. Corollary~\ref{cor:finite-etale-boundary-charts},
Lemma~\ref{fettest}, and
Lemma~\ref{lem:DdR-comp-directimages}
reduce us to
\[
 X=\Sp K\langle z_1,\dots,z_n\rangle,
 \qquad
 X\setminus U=V(z_1\cdots z_m).
\]
The logarithmic Riemann--Hilbert correspondence
\cite[Theorem~1.7]{MR4536903} extends $\E$ to a logarithmic connection
$\M_{\log}$ on $X$ with rational residue eigenvalues.

Put $\M=\M_{\log}(*D)$. Corollary~\ref{cor:rationalresidues} shows that the
$b$-functions of a suitable generating set of $\M$ have rational roots. The
criterion of Bitoun--Bode~\cite[Theorem~1.2]{Bitoun} therefore implies that
$j_*\E=j_*\mathcal{M}|_U$ is coadmissible and weakly holonomic by Corollary~\ref{postypeext}.

For holonomicity, we note that $\mathbb{L}$
has quasi-unipotent geometric monodromy along $Z$;
this is explained in the proof of~\cite[Lemma 3.4.11]{MR4536903},
see also~\cite[Lemma 2.15]{LiuZhuRH2017}.
Thus we can choose a Kummer cover
\[
 \pi\colon X_N=\Sp K\langle t_1,\dots,t_n\rangle\to X,
 \qquad
 z_i\mapsto
 \begin{cases}
 t_i^N,&i\leq m,\\
 t_i,&i>m
 \end{cases}
\]
with $U_N=\pi^{-1}(U)$ and
$\pi_U=\pi|_{U_N}$, for $N>0$ large enough,
such that $\pi_U^{*}\bL$ has unipotent geometric monodromy along
$Z_N:=X_N\setminus U_N$. Here we view $X_N$ as a log adic space,
with log structure induced by the divisor $Z_N$. 

We observe that the connection $\pi_U^*\mathcal E$ extends to
a log-connection on $X_N$, namely
\[
  \mathcal F:= D_{\dR,\log}\left(\pi_U^*\mathbb L\right),
\]
where we confuse the $\mathbb Z_{p}$-local system $\pi_U^*\mathbb L$
on $U_{N,\et}$ as the associated $\mathbb Z_p$-local
system on the Kummer étale site $X_{N,\ket}$.
Indeed,
\[
  \mathcal F|_{U_{N}}
  =D_{\dR}\left(\pi_{U}^{*}\mathbb L\right)
  \cong\pi_U^* D_{\dR}(\mathbb L)
  =\pi_U^* \mathcal E,
\]
where the isomorphism is~\cite[Theorem 3.9(ii)]{LiuZhuRH2017}.

Since $\pi_U^{*}\bL$ has unipotent geometric monodromy along $Z_N$ and it is de Rham (compare
\cite[Theorem 3.9(ii)]{LiuZhuRH2017})
we can apply~\cite[Theorem~3.2.12(2)]{MR4536903}.
This yields that the logarithmic extension
$\mathcal F$ of $\pi_U^*\E$ has nilpotent residues
along $Z_N$.
Hence $j_{N,+}\pi_U^*\E$,
for $j_N\colon U_N\hookrightarrow X_N$ the embedding,
is holonomic by
Corollary~\ref{cor:nilpotent-log-holonomic}. Since $\pi$ is projective,
\cite[Theorem~5.10(iii)]{Hol} and compatibility of direct images give a
holonomic complex
\[
 \pi_+j_{N,+}\pi_U^*\E
 \cong j_+\pi_{U,+}\pi_U^*\E.
\]
The unit and trace maps exhibit $\E$ as a direct summand of
$\pi_{U,+}\pi_U^*\E$, so that $j_+\E=j_*\E$ is a direct summand of $j_*\pi_{U, +}\pi_U^*\E$. Lemma~\ref{directsummand} now proves that $j_+\E$ is
holonomic.
\end{proof}

\begin{proof}[Proof of Theorem~\ref{introthm:snc}]
For $\mathbb{L}:=R^q f_{\et,*}\mathbb Z_p$,
Lemma~\ref{lem:DdR-comp-directimages} implies
\[
  \mathcal E^q
  \cong D_{\dR}\left(\mathbb L\right).
\]
Since $\mathbb L$ is a de Rham $\mathbb Z_p$-local system
on $U_{\et}$,
Proposition~\ref{prop:snc-extension}
applies.
\end{proof}

\begin{proof}[Proof of Theorem~\ref{thm:main}]
We show the stronger statement that for each $q\in \mathbb{Z}_{\geq 0}$, $j_+\mathcal{E}^q$ is a holonomic $\C$-complex on $X$. As the cohomology groups of $\C$-complexes are coadmissible by \cite[Theorem 8.9(i)]{6Op} and 
\begin{equation*}
    \mathrm{H}^i(j_+\mathcal{E}^q)=R^ij_*\mathcal{E}^q,
\end{equation*}
this is enough to deduce the theorem.

Temkin's desingularisation, in the form used in
\cite[Sections~10.4--10.5]{Dcapthree}, gives locally a projective morphism
$\rho\colon X'\to X$ which is an isomorphism over $U$ and for which
$X'\setminus U$ is snc. If $j'\colon U\hookrightarrow X'$, then
\[
 j_+\E^q\cong\rho_+j'_+\E^q
\]
by \cite[Corollary 9.8]{6Op}.

Proposition~\ref{prop:snc-extension} makes $j'_+\E^q$ holonomic. As projective direct images preserve holonomicity by \cite[Theorem 5.10(iii)]{Hol}, we deduce that $j_+\E^q$ is a holonomic $\C$-complex, as desired.
\end{proof}

\begin{proof}[Proof of Theorem~\ref{introthm:mainhol}]
The complex $f_+\O_Y$ is bounded and has cohomology sheaves $\E^q$. Its
canonical truncation filtration has graded pieces $\E^q[-q]$. After applying
$j_+$, these become the holonomic complexes $j_+\E^q[-q]$ from the proof of
Theorem~\ref{thm:main}. By~\cite[Lemma~5.2]{Hol}, holonomic complexes form a
triangulated subcategory. Hence $j_+f_+\O_Y$ is a holonomic $\C$-complex.
\end{proof}

\bibliographystyle{plain}
\bibliography{lambda}
\end{document}